      \newcommand {\al}   {\alpha}          
      \newcommand {\gam } {\gamma}          \newcommand {\Gam}  {\Gamma}
              \newcommand {\ve}   {\varepsilon}
                 \newcommand {\vphi} {\varphi}
      \newcommand {\lam}  {\lambda}         \newcommand {\Lam}  {\Lambda}
      \newcommand {\mumu}   {\vartheta}       \newcommand {\nnn}   {n}
      \newcommand {\pl}   {\partial}
            \newcommand {\III}  {{\cal I}}
      \newcommand {\RRR}  {{\mathbb R}}
      \newcommand {\FFF}  {{\cal F}}
     \newcommand {\kap}  {{g}}                           
       \newcommand {\DDD}  {\big[D\big]}      \newcommand {\nvis}  {\mathcal{F}}
     \newcommand {\beq}  {\begin{equation}}
      \newcommand {\eeq}  {\end{equation}}
    \newcommand {\PP}{\Phi}
      \newtheorem{theorem}{Theorem}
      \newtheorem{lemma}{Lemma}
      \newtheorem{utv}{Proposition}
      \newtheorem{zam}{Remark}
\title{The problem of optimal camouflaging}
\author{Alexander Plakhov\thanks{Center for R{\&}D in Mathematics and Applications, Department of Mathematics, University of Aveiro, Portugal,
plakhov@ua.pt} \and Vera Roshchina\thanks{School of Mathematics and Statistics, UNSW Sydney, Australia, v.roshchina@unsw.edu.au}}
\begin{document}

\maketitle

\begin{abstract}
We consider the problem of camouflaging for bodies with specular surface in the framework of geometric optics. The {\it index of visibility} introduced in \cite{camouflage} measures the mean deviation of light rays incident on the body's surface. We study the problem of minimal visibility index for bodies with fixed volume contained in the unit sphere. This problem is reduced to a special vector-valued problem of optimal mass transfer, which is solved partly analytically and partly numerically. This paper is a continuation of the study started in \cite{UMN09}, \cite{camouflage}, and \cite{invisibility}.
\end{abstract}

\begin{quote}
{\small {\bf Mathematics subject classifications:} 37C83, 49Q10
}
\end{quote}

\begin{quote}
{\small {\bf Key words and phrases:}
Camouflaging, invisibility, billiards, geometric optics, vector-valued optimal mass transport.}
\end{quote}

\section{Introduction}

\subsection{Preliminary notes}
To hide from human eyes, to become invisible or almost invisible --- this idea goes back to the folk epics, like Perseus' magic helmet of invisibility, it occupies the mind of the scientist from H. G. Wells' novel {\it The Invisible Man}, the developers of military equipment (stealth technology), and enthralls the masses via popular books and Hollywood movies, from Predator to Harry Potter. The ideas of invisibility and camouflage have many faces and allow for many different formulations in physics, technology and mathematics (bending light around an object using metamaterial cloaking).

In this article, we consider the idea of invisibility and camouflaging for objects with a mirror reflective surface within the framework of geometric optics, which in addition to the undeniable entertainment value has deep connections with fundamental phenomena and unsolved problems in dynamical systems. From the mathematical point of view, we study billiards in the exterior of bounded sets, and so `invisibility' refers to the construction of bodies that have reflective properties that either mimic invisibility or minimise resistance in terms of a certain natural measure. In \cite{invisibility} it was proved that there are no perfectly invisible sets in such model: whatever the shape of such an object, a careful observer will find that the background looks different with the presence of the object than without it. However the question remains on how close can we get to invisibility.


It is natural to introduce a value measuring visibility of the body (index of visibility), and to find the body, from a certain class of admissible bodies, for which this value is minimal. One approach is to use the index of visibility that was introduced in \cite{camouflage}. The problem of minimization for this index, however, seems to be quite difficult in general, so we focus on a more accessible task of finding a reasonable lower estimate for the visibility index. Namely, we consider a class of sets with fixed volume in $\RRR^d,\, d \ge 2$ contained in a ball of fixed radius, and show that the visibility index in this class is always greater than a certain value.

An estimate of this kind has already been obtained in \cite{camouflage}. In addition, in \cite{UMN09} an estimate was found in the case when the volume of the set is close to the volume of the ambient ball. In the present paper we obtain improved estimates, which are significantly sharper than the one in \cite{camouflage} and coincide with the estimate in \cite{UMN09} in the limit when the volume of the set tends to the volume of the ambient ball (see Theorems~\ref{tt1}, \ref{tt2}, and \ref{t5}). Our method is based on solving a particular problem of optimal mass transfer with a constraint. In addition to the theoretical one, a numerical estimate is also obtained. We do not yet know if the lower bound obtained here is sharp.

Our paper is organized as follows. In Sections~\ref{sec:maths} and~\ref{sec:results} of this Introduction, we introduce the mathematical context of our problem and give an overview of our results. The rest of the paper is dedicated to technical details and proofs: in Sections~\ref{sec:proofThm1} and~\ref{sec:proofThm2} we prove our main theoretical results (Theorems~\ref{tt1} and~\ref{tt2} respectively), and Section~\ref{sec:numeric} is dedicated to Theorem \ref{t5} and the description and outcomes of our numerical experiments. There is also a brief Appendix that contains technical calculations that are required for mathematical rigour, but are less helpful for the intuition and for the big picture understanding of our findings.


\subsection{Mathematical formulation}\label{sec:maths}
Consider a compact finitely connected set $D$ with piecewise smooth boundary in the Euclidean space $\RRR^d,\, d \ge 2$ and a compact convex set $C$ 
containing $D$. Thus, we have $D \subset 
C \subset \RRR^d$. Denote by $n_\xi$ the unit outward normal to $C$ at a regular point\footnote{A point on the boundary of a convex set $C \subset \RRR^d$ is regular, if there is only one hyperplane of support (line if $d=2$ and plane if $d=3$) through this point.} $\xi \in \pl C$. By $\langle \cdot \,, \cdot \rangle$ we designate the scalar product.

Denote by $\mumu = \mumu_C$ the measure on $S^{d-1} \times \pl C$ such that $d\mumu(v,\xi) = \langle v, n_\xi \rangle_+ dv\, d\xi$. Here and in what follows, $z_+$ and $z_-$ mean the positive and negative parts of $z$, that is, $z_\pm = \max\{ 0, \pm z \}$. It follows that the $\mumu$-measure of the set $(S^{d-1} \times \pl C) \cap \{ \langle v, n_\xi \rangle < 0 \}$ is zero.

Consider the billiard in $\RRR^d \setminus D$ and define the map $(v,\xi) \mapsto (v^+, \xi^+)$ 
from a subset of $S^{d-1} \times \pl C$ to $S^{d-1} \times \pl C$ as follows. For each $(v,\xi) \in S^{d-1} \times \pl C$ consider the billiard particle that starts moving at the time instant $t = 0$ at the point $\xi$ with the velocity $-v$, and fix the instant $t \ge 0$ when it leaves $C$. Denote by $\xi^+ = \xi^+_{D,C}(v,\xi)$ and by $v^+ = v^+_{D,C}(v,\xi)$ the position and the velocity of the particle at that instant; see Fig.~\ref{fig:CD}.\footnote{If the particle at some instant gets into a singular point of $\pl D$, or makes infinitely many reflections on a finite time interval, the values $\xi^+_{D,C}(v,\xi)$ and $v^+_{D,C}(v,\xi)$ are not defined. The set of corresponding points $(v,\xi) \in S^{d-1} \times \pl C$ has measure zero; for the proof see, e.g., Section 1.7 in the book \cite{T}.}
\begin{figure}
\begin{picture}(0,110)
\rput(8,2){
\scalebox{1}{
\pscircle(0,0){2}
\psecurve[fillstyle=solid,fillcolor=lightgray](0.5,-0.63)(-0.2,-1.25)(1,-1)(0.75,0.2)(0.5,-0.63)(-0.2,-1.25)(1,-1)
\psecurve[fillstyle=solid,fillcolor=lightgray](-0.1,-0.4)(0,0.45)(0.67,1.1)(-0.5,1)(-0.4,-0.4)(-0.1,-0.4)(0,0.45)(0.67,1.1)
\psline[linecolor=red,arrows=->,arrowscale=2,linewidth=0.8pt](-1.9,-0.63)(-0.9,-0.63)
\psline[linecolor=red,arrows=->,arrowscale=2,linewidth=0.8pt](-1.9,-0.63)(-2.75,-0.63)
\psline[linecolor=red,arrows=->,arrowscale=2,linewidth=0.8pt](-0.9,-0.63)(0.5,-0.63)(0,0.45)(1.8,0.87)
\psline[linecolor=red,arrows=->,arrowscale=2,linewidth=0.8pt](1.8,0.87)(2.7,1.08)
\rput(2.6,1.4){\scalebox{1}{$v^+$}}
\rput(-2.15,-0.3){\scalebox{1}{$\xi$}}
\rput(2.23,0.7){\scalebox{1}{$\xi^+$}}
\rput(-2.75,-0.88){\scalebox{1}{$v$}}
\psdots(-1.9,-0.63)(1.8,0.87)
\rput(2.25,-0.6){\scalebox{1.2}{$C$}}
}
}
\end{picture}
\caption{The domain $D$ (with 2 connected components) is shown shaded, and the ambient body $C$ is the disk.}
\label{fig:CD}
\end{figure}
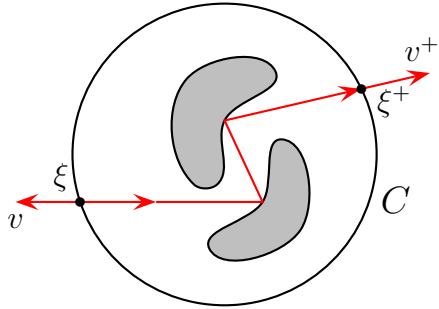
The map
$$
T = T_{D,C} : \ (v, \xi) \mapsto (v^+_{D,C}(v,\xi),\ \xi^+_{D,C}(v,\xi))
$$
is a one-to-one measure preserving map from a full-measure subset of the measure space $(S^{d-1} \times \pl C,\, \mumu)$ onto itself.

The value
\beq\label{RD}
\mathcal{R}_C(D) = \int_{S^{d-1} \times \pl C} \frac 12\, |v + v^+_{D,C}(v,\xi)|^2\, d\mumu(v,\xi)
\eeq
is called the {\it index of visibility} of $D$. It is proved in the book \cite{ebook}, Chapter 2, that this value does not depend on the convex body $C$ containing $D$, and therefore, one can write $\mathcal{R}(D)$ in place of $\mathcal{R}_C(D)$.

The interpretation of this value in terms of geometric optics is the following. The integrand $\frac 12\, |v + v^+_{D,C}(v,\xi)|^2 = \langle v + v^+_{D,C}(v,\xi), \ v \rangle$ measures the deviation of the direction of a light ray incident on $D$. Correspondingly, $\mathcal{R}(D)$ is the sum of deviations of the direction of all incident light rays, and thus, can serve as a measure of visibility of $D$.

This value can also be interpreted in terms of classical mechanics. Consider a solid body $D$ moving with unit velocity $v$ in a rarefied medium composed of point particles at rest. The particles do not mutually interact and are reflected from the body in the perfectly elastic way. This is the mechanical model considered by Newton in his {\it Principia} and studied by many authors afterwards (see, e.g., \cite{BK,BFK,BrFK,CL1,CL2,LO,LP1,UMN09,canadian,ebook,SIREV,Nonl2016}).

The {\it resistance} of $D$ in the direction $v$ is the modulus of the orthogonal projection of the drag force on the direction of motion $v$. Assuming that the medium density is $1$, the resistance in the direction $v$ equals
$$
R(D,v) = \int_{\{ \xi \in \pl C : \langle v, n_\xi \rangle \le 0\} } \langle v + v^+_{D,C}(v,\xi),\, v \rangle\, |\langle v, n_\xi \rangle|\, d\xi.
$$
The function $v \mapsto R(D,v)$ exists for almost all $v \in S^{d-1}$, is measurable and bounded, and therefore, integrable. We have
$$
\mathcal{R}(D) = \int_{S^{d-1}} R(D,v)\, dv.
$$

Thus, the value $\mathcal{R}(D)$ is the average of resistance in a direction over all directions. Therefore it can also be called the {\it mean resistance} of $D$.

Let $B_r^d$ designate the ball with radius $r$ centered at the origin. Denote by $s_{d-1} = \frac{2\pi^{d/2}}{\Gam(d/2)}$ the area of the $(d-1)$-dimensional unit sphere $S^{d-1}$, and by $b_{d} = \frac{2\pi^{d/2}}{d\Gam(d/2)} = \frac{1}{d} s_{d-1}$ the volume of the $d$-dimensional unit ball $B_1^d$. One has, in particular, $s_0 = 2,\; s_1 = 2\pi,\; s_2 = 4\pi,\; b_1 = 2,\; b_2 = \pi,\; b_3 = 4\pi/3$.

Slightly abusing the notation, we designate by $|D|$ the $d$-dimensional volume of the set $D \subset \RRR^d$, and by $|\pl C|$, the $(d-1)$-dimensional volume of the boundary of the convex set   
$C \subset \RRR^d$.

Note that the index of visibility of a {\it convex} set $C \subset \RRR^d$ can easily be determined (see sections 6.1.1 and 6.2 of \cite{ebook}). For the reader's convenience we provide the derivation of $\mathcal{R}(C)$ here. One has $\xi_{C,C}^+(v,\xi) = \xi$,\, $v_{C,C}^+(v,\xi) = -v + 2\langle v,\, n_\xi \rangle n_\xi$, hence
$$
\mathcal{R}(C) = \int_{S^{d-1} \times \pl C} 2\, \langle v,\, n_\xi \rangle^2\, d\mumu(v,\xi) = |\pl C| \int_{S^{d-1}} 2\, \langle v,\, n_{\xi_0} \rangle_+^3\, dv,
$$
where $\xi_0$ is an (arbitrarily chosen) regular point on $\pl C$.
Take the map $v \mapsto v - \langle v,\, n_\xi \rangle n_\xi = w = r\gam$ (with $r=|w|$ and $\gam \in S^{d-2}$), which is the orthogonal projection of $S^{d-1}$ onto the unit $(d-1)$-dimensional ball in the plane $\{ n_\xi \}^\perp$. We have $\langle v,\, n_\xi \rangle_+\, dv = dw = d\gam\, dr$, and
$$
\int_{S^{d-1}} \langle v,\, n_\xi \rangle_+^3\, dv = \int_{B_1^{d-1}} (1 - r^2)\, d\gam\, dr
= s_{d-2} \int_0^1 (1 - r^2)\, r^{d-2}\, dr = \frac{2}{d^2 - 1}\, s_{d-2} = \frac{2}{d + 1}\, b_{d-1},
$$
hence $\mathcal{R}(C) = \frac{4}{d+1}\, b_{d-1} |\pl C|$.
In the 2D and 3D cases one has, respectively, $\mathcal{R}(C) = \frac{8}{3} |\pl C|$ and $\mathcal{R}(C) = \pi |\pl C|$.  Thus, the  index of visibility of the unit ball $B_1^d$ equals
$$\mathcal{R}(B_1^d) = \frac{4b_{d-1}s_{d-1}}{d+1},$$
and in particular, in the 2D and 3D cases
$$\mathcal{R}(B_1^2) = \frac{16}{3}\, \pi \quad \text{and} \quad \mathcal{R}(B_1^3) = 4\pi^2.$$

It is convenient to introduce the normalized volume $\DDD$ and the normalized visibility $\nvis(D)$  of a body $D$ as
\beq\label{norm}
\DDD := \frac{1}{b_d}\, |D|, \qquad \nvis(D) := \frac{d+1}{4b_{d-1}s_{d-1}}\, \mathcal{R}(D),
\eeq
so that the normalized volume and normalized visibility of the unit ball are equal to 1.

\subsection{Results}\label{sec:results}

In this paper we are concerned with the problem of minimal index of visibility for bodies with fixed volume contained in a unit sphere.
In terms of normalized volume and normalized visibility the problem takes the form
\beq\label{infi}
\text{Find} \quad \inf \{ \FFF(D) : \ \DDD = a \text{ and } D \subset B_1^d \}, \quad \text{where } 0 < a < 1.
\eeq

We cannot yet solve it completely. The aim of this paper is to derive several lower estimates for the infimum.

\begin{zam}
Note that the index of visibility $\FFF(D)$ is invariant under translations and possesses the homogeneity property: $\FFF(rD) = r^{d-1} \FFF(D)$, where $rD := \{ rx : x \in D \}$. This means that having found a lower bound for the infimum in \eqref{infi}, we will be able to provide a similar lower bound for the class of sets with fixed volume contained in any fixed ball.
\end{zam}

Here we continue the work initiated in the papers \cite{UMN09} (see also Chapters 5 and 6 of \cite{ebook}) and \cite{camouflage}.

Let $\varsigma = \varsigma_d$ be the probability measure on $[0,\, \pi/2]$ with the generating function $\sin^{d-1}\vphi$, that is, 
\begin{equation}\label{varsigma}
d\varsigma(\vphi) = (d-1) \sin^{d-2}\vphi \cos\vphi\, d\vphi.
\end{equation}
Denote by $\Gam_{\varsigma,\varsigma}$ the set of measures $\chi$ defined on the square $[0,\, \pi/2]^2$ such that both marginals of $\chi$ coincide with $\varsigma$. In other words, $\Gam_{\varsigma,\varsigma}$ is the set of measures $\chi$ on $[0,\, \pi/2]^2$ satisfying the following condition: for any Borel set $A \subset [0,\, \pi/2]$,
$$
\chi(A \times [0,\, \pi/2]) = \chi([0,\, \pi/2] \times A) = \varsigma(A).
$$

In Section 4 of \cite{UMN09} the following problem was studied: for a convex body $C \subset \RRR^d$ determine the limit
$$
m_d(C) := \lim_{\ve\to 0^+}  \frac{\inf\{ \FFF(D) : \, D \subset C,\ |C \setminus D| < \ve \}}{\FFF(C)}.
$$
It was proved that these values do not depend on $C$, and each value $m_d(C) = m_d$ is the minimum in the problem
\beq\label{md}
m_d =  \dfrac{d+1}{4} \inf_{\chi\in\Gam_{\varsigma,\varsigma}}  \int_{[0,\, \pi/2]^2} (1 + \cos(\vphi+\psi))\, d\chi(\vphi,\psi)
\eeq
(see formula (54) in section 4.2 of \cite{UMN09}). This minimum was proved to be the minimum of a certain well-defined function of one variable, and then was numerically determined. It was found, in particular, that $m_2 = 0.987820...$ and $m_3 = 0.969445...$.

This result can be interpreted as follows. A sequence of sets $D_n \subset C$, such that $|C \setminus D_n| \to 0$ as $n \to \infty$ and there exists $\lim_{n\to\infty} \FFF(D_n)$, is called a {\it roughening} of $C$. The aforementioned limit $\lim_{n\to\infty} \FFF(D_n)$ is called the resistance of the roughening. It is proved that the minimal resistance of a roughening of $C$ equals $m_d\, \FFF(C)$, independently of the choice of $C$.

In particular, taking $C$ to be the unit ball $B_1^d$, one can find a sequence of sets $D_n$ contained in the ball with their volumes approaching the ball's volume, such that
$$
\lim_{n\to\infty} \FFF(D_n) = m_d,
$$
and the limit cannot be made smaller. In other words,
$$
\lim_{\ve\to0^+} \inf_{\substack{D\subset B_1^d\\ [D] = 1-\ve}} \FFF(D) = m_d.
$$

In \cite{camouflage} the following Theorems \ref{t1} and \ref{t2} are obtained. They are stated here in a slightly modified form taking into account the different (normalized) notation adopted here.

The positive values $c_d$ in Theorem \ref{t1} are specified in \cite{camouflage} (in particular, $c_2 = \pi^3/288 \approx 0.11$ and $c_3 = 16/729 \approx 0.022$).

\begin{theorem}\label{t1} {\rm\bf (Theorem 2.1 in \cite{camouflage})}

Let $D \subset \RRR^d$ be contained in a ball of radius $1$. Then
$$
\FFF(D) \ge h_d(\DDD),
$$
where $h_d$ is a function of positive variable satisfying
$$
h_d(x) = c_d x^3 (1 + o(1)) \quad \text{as} \ \, x \to 0^+.
$$
\end{theorem}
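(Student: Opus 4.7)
The strategy is to bound $\mathcal{R}(D)$ from below using only the contribution of single-reflection rays, since their integrand is geometrically tractable. Because $\tfrac12|v+v^+|^2 \ge 0$ pointwise, discarding any subfamily of rays still yields a valid lower bound on $\mathcal{R}(D)$, so I am free to work with whichever subfamily is easiest to estimate.

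Concretely, I would fix $C = B_1^d$ (allowed because $\mathcal{R}(D)$ is independent of the choice of $C$) and parametrize incoming rays by a direction $v \in S^{d-1}$ together with an impact parameter $y$ in $v^\perp$. On a single-reflection ray one has $v + v^+ = 2\langle v, n_x\rangle n_x$ at the first hit point $x \in \pl D$, so the integrand equals $2\langle v, n_x\rangle^2$, an expression in the first-hit normal alone. To extract a lower bound on the total single-reflection contribution, I would use the containment $D \subset B_1^d$: the diameter of $D$ is at most $2$, hence for every direction $v$ the projection satisfies $|\mathrm{proj}_{v^\perp} D| \ge |D|/2$. Combining this with a Cauchy--Schwarz (or Chebyshev) inequality that relates $\int \langle v, n\rangle^2\, dy$ to $(\int |\langle v, n\rangle|\, dy)^2$ normalised by the measure of the impact region, and then integrating over $v \in S^{d-1}$, should yield the cubic bound $\FFF(D) \gtrsim \DDD^3$. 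The explicit constants $c_d$, such as $c_2 = \pi^3/288$ and $c_3 = 16/729$, would come out of the optimisation at this step.

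The hardest part will be to quantify the single-reflection contribution in the worst case. A clever non-convex $D$ (spiky folds, nested traps, chevrons) can force most incoming rays into multiple reflections, so the set of single-reflection impact parameters may \emph{a priori} be tiny in each direction. To obtain the sharp cubic exponent, one needs either to carefully identify a robust class of rays whose first hit lies at a point where $D$ and $\mathrm{conv}(D)$ share a tangent hyperplane --- such rays are automatically single-hit on $\pl D$ --- or to argue that even multiply reflecting rays contribute a nonnegative amount that does not cancel the single-reflection estimate. A supporting ingredient will be a Cauchy-type formula linking $\int_{S^{d-1}} |\mathrm{proj}_{v^\perp} \mathrm{conv}(D)|\, dv$ to the surface area of $\mathrm{conv}(D)$, bounded in turn by the isoperimetric inequality in terms of $|D|$, to control the total measure of rays that engage with $D$ at all.
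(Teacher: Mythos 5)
You should first be aware that the paper contains no proof of Theorem~\ref{t1}: it is imported verbatim (as Theorem 2.1 of \cite{camouflage}) and used as a benchmark for the new estimates, so there is no in-paper argument to compare yours against; the assessment below is of your sketch on its own terms.

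The sketch has a genuine gap at its central step, and it is the very difficulty you flag yourself. Restricting to single-reflection rays is indeed legitimate for a lower bound, but your quantitative input, the projection inequality $|\mathrm{proj}_{v^\perp} D| \ge |D|/2$, controls the measure of rays that hit $D$ \emph{at all}, not the measure of rays that reflect exactly once; nothing in the argument converts the one into the other. For a body of fixed volume one can cover the boundary with small grooves or chevron-type microstructures so that almost every incident ray undergoes at least two reflections, making the single-reflection family (or its non-grazing part, which is what carries the weight $\langle v,n\rangle^2$) of arbitrarily small measure while $\DDD$ stays fixed; hence no bound of the form $c_d\DDD^3$ can come out of the single-reflection contribution alone. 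Your two fallback options do not close this: rays whose first hit lies on $\pl D \cap \pl\,\mathrm{conv}(D)$ are indeed single-hit, but that contact set can have arbitrarily small $(d-1)$-measure even when $\DDD$ is close to $1$, so its contribution is not bounded below by any function of the volume; and the remark that multiply reflected rays contribute a nonnegative amount is true but vacuous, since the problem is not cancellation but the absence of a lower bound. Any correct proof must quantify the deviation of multiply reflected trajectories; this is what \cite{camouflage} does (and what the present paper does, more sharply, in Theorems~\ref{tt1} and~\ref{tt2} via the Santal\'o--Stoyanov phase-volume inequality and a transport-plan reformulation relating the exit data $(v^+,n^+)$ of the whole trajectory to $|D|$), rather than isolating single-hit rays. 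Relatedly, your expectation that the specific constants $c_2 = \pi^3/288$, $c_3 = 16/729$ would "come out of the optimisation" is unsupported as the scheme stands.
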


In the 2D and 3D cases the following stronger theorem is valid.

\begin{theorem}\label{t2} {\rm\bf (Theorem 2.3 in \cite{camouflage})}

(a) Let $D \subset \RRR^2$ be contained in a circle of radius $1$. Then
$$
\FFF(D) \ge \frac{\pi^3}{288} \DDD^3  \approx 0.11\, \DDD^3.
$$

(b) Let $D \subset \RRR^3$ be contained in a ball of radius $1$. Then
$$
\FFF(D) \ge \frac{16}{729} \DDD^3  \approx 0.022\, \DDD^3.
$$
\end{theorem}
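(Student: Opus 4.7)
The plan is to lower-bound $\FFF(D)$ by isolating the contribution of single-reflection trajectories whose reflection point lies on the ``exposed'' part of $\partial D$, namely on $\partial D \cap \partial \text{conv}(D)$. At such a point $p$ the outward normals of $D$ and $\text{conv}(D)$ coincide, the incoming ray from $\partial B_1^d$ reaches $p$ without obstruction (because $D \subset \text{conv}(D)$ and $\text{conv}(D)$ is convex), and the reflected ray exits $\text{conv}(D)$ into the exterior for good, so a single-reflection analysis applies: $v^+ = -v + 2\langle v, n_p\rangle n_p$ and $\tfrac12 |v+v^+|^2 = 2 \langle v, n_p\rangle^2$.

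Performing the change of variables $\xi \mapsto p$ (with $\langle v, n_\xi\rangle\, d\xi = \langle v, n_p\rangle\, d\sigma(p)$), followed by the $v$-integration already used in the paper to compute $\mathcal{R}(C)$ for convex $C$ (namely $\int_{S^{d-1}}\langle v, u\rangle_+^3\, dv = \tfrac{2 b_{d-1}}{d+1}$), I would obtain the intermediate inequality
\[
\mathcal{R}(D) \ \ge\ \frac{4 b_{d-1}}{d+1}\, \bigl|\partial D \cap \partial \text{conv}(D)\bigr|.
\]
Under the normalization \eqref{norm}, this becomes $\FFF(D) \ge |\partial D \cap \partial \text{conv}(D)|/s_{d-1}$, reducing the theorem to the purely geometric inequality $|\partial D \cap \partial \text{conv}(D)| \ge s_{d-1}\, c_d\, \DDD^3$, with $c_2 = \pi^3/288$ and $c_3 = 16/729$.

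For this last estimate I would combine the classical isoperimetric inequality for the convex set $\text{conv}(D) \subset B_1^d$ (which gives $|\partial \text{conv}(D)|$ bounded below by a constant times $|D|^{(d-1)/d}$) with a Krein--Milman-type argument forcing a substantial portion of $\partial \text{conv}(D)$ to lie in $D$, since every extreme point of $\text{conv}(D)$ belongs to $D$. The restriction $\DDD \le 1$ ensures that the weaker cubic lower bound $\DDD^3$ is dominated by the $\DDD^{(d-1)/d}$ coming from isoperimetry, and the explicit constants for $d=2$ and $d=3$ then follow from a direct computation using $s_1 = 2\pi$, $b_1 = 2$, $s_2 = 4\pi$, $b_2 = \pi$.

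The main obstacle is controlling $|\partial D \cap \partial \text{conv}(D)|$ for highly non-convex $D$, where the exposed boundary can be much smaller than $|\partial \text{conv}(D)|$. For configurations with a tiny exposed core but large cavities (for instance a union of thin spikes emanating from an interior mass), one must supplement the single-reflection argument above with a contribution from rays that pass through cavities of $\text{conv}(D)\setminus D$ and reflect off interior components of $D$. The cubic dependence on $\DDD$, weaker than the $(d-1)/d$-power available from pure isoperimetry, provides precisely the slack needed for this hybrid estimate to deliver the stated sharp constants in dimensions $2$ and $3$.
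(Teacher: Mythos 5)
Your argument has a genuine gap, and it is worth noting first that the present paper does not prove this statement at all: it is quoted as Theorem 2.3 of \cite{camouflage}, so your proof has to stand on its own. The single-reflection step is fine as far as it goes: if the first impact of a ray occurs at a regular point $p\in\partial D\cap\partial\,\mathrm{conv}(D)$ with $\langle v,n_p\rangle>0$, then both the incoming and the reflected segments lie in the open half-space cut off by the supporting hyperplane of $\mathrm{conv}(D)$ at $p$, so the contribution is exactly $2\langle v,n_p\rangle^2$, and since the integrand in \eqref{RD} is nonnegative one indeed gets $\mathcal{R}(D)\ge\frac{4b_{d-1}}{d+1}\,\bigl|\partial D\cap\partial\,\mathrm{conv}(D)\bigr|$, i.e.\ $\nvis(D)\ge \bigl|\partial D\cap\partial\,\mathrm{conv}(D)\bigr|/s_{d-1}$. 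The fatal step is the reduction to the ``purely geometric inequality'' $\bigl|\partial D\cap\partial\,\mathrm{conv}(D)\bigr|\ge s_{d-1}c_d\,\DDD^3$: this inequality is false. Take, for instance, a central ball of radius $1/2$ together with three (or $d+1$) tiny balls of radius $\delta$ placed near the unit sphere so that the central ball lies in the interior of the convex hull; then $\DDD$ is bounded below by a fixed constant, while $\bigl|\partial D\cap\partial\,\mathrm{conv}(D)\bigr|=O(\delta^{d-1})$ is arbitrarily small. The ``Krein--Milman-type argument'' cannot repair this: every extreme point of $\mathrm{conv}(D)$ does belong to $D$, but the extreme points may carry zero $(d-1)$-dimensional measure, so no substantial portion of $\partial\,\mathrm{conv}(D)$ need lie in $D$, and the isoperimetric lower bound on $\bigl|\partial\,\mathrm{conv}(D)\bigr|$ says nothing about $\bigl|\partial D\cap\partial\,\mathrm{conv}(D)\bigr|$.

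You acknowledge this obstacle and invoke a ``hybrid estimate'' for rays that enter the cavities of $\mathrm{conv}(D)\setminus D$ and reflect off interior components, but no such estimate is given, and it is precisely the whole difficulty: a ray entering a cavity can undergo several reflections and exit with arbitrarily small total deviation, so one cannot assign a convex-body-like contribution to cavity rays. The near-invisibility of interior structure is exactly the phenomenon the visibility index is designed to quantify, and the cubic power in the statement is not free ``slack'' that absorbs such configurations --- in \cite{camouflage} it is the outcome of a quantitative argument tying the volume of $D$ to the measure of incident rays and the size of their deviations, not of an exposed-surface count. As written, your argument establishes only $\nvis(D)\ge \bigl|\partial D\cap\partial\,\mathrm{conv}(D)\bigr|/s_{d-1}$, which carries no lower bound in terms of $\DDD$ and therefore does not prove the theorem.
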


These estimates are disappointingly small, especially when the values of $\DDD$ are close to 1.

The main results of the present paper are Theorems \ref{tt1}, \ref{tt2}, and \ref{t5}, which provide a significant strengthening of Theorems \ref{t1} and \ref{t2}. The proofs of Theorems \ref{tt1} and \ref{tt2} are given in Sections~\ref{sec:proofThm1} and \ref{sec:proofThm2} respectively. Theorem \ref{t5} is stated and proved in Section \ref{sec:numeric}.

\begin{theorem}\label{tt1}
Let a set $D \subset \RRR^d$ be contained in a unit ball. Then
\beq\label{est}
\nvis(D) \ge m_d - \frac{d+1}{4}\, \frac{b_d}{b_{d-1}}\, (1 - \DDD).
\eeq
In particular,
\begin{equation*}
\begin{split}
& \nvis(D) \ge m_2 - \frac{3\pi}{8} \big(1 - \DDD\big) \ \ \, \text{for $d=2$} \\
\text{and} \quad & \nvis(D) \ge m_3 - \frac{4}{3} \big(1 - \DDD\big) \ \ \, \text{for $d=3$}.
\end{split} \end{equation*}
\end{theorem}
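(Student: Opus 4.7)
The plan is to apply Kantorovich duality to the mass transport problem defining $m_d$ and combine the resulting dual lower bound with a chord-length estimate of $|E|$.

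Set $C=B_1^d$ and $E=C\setminus D$, so $|E|=b_d(1-\DDD)$. For every entering ray $(v,\xi)\in S^{d-1}\times\pl C$ write $\vphi=\angle(v,n_\xi)$ and $\psi=\angle(v^+,n_{\xi^+})$; by the $T_{D,C}$-invariance of $\mumu$ and a short direct calculation, both $\vphi$ and $\psi$ have marginal $s_{d-1}b_{d-1}\varsigma_d$ under $\mumu$. Applying Kantorovich duality to the infimum in \eqref{md} produces potentials $f,g$ on $[0,\pi/2]$ with $f(a)+g(b)\le 1+\cos(a+b)$ and $\int(f+g)\,d\varsigma_d=\tfrac{4m_d}{d+1}$. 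Integrating this pointwise inequality against $\mumu$ yields
\begin{equation*}
m_d\,\mathcal{R}(B_1^d)\ \le\ \int_{S^{d-1}\times\pl C}\bigl(1+\cos(\vphi+\psi)\bigr)\,d\mumu.
\end{equation*}

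The proof then reduces to establishing the matching upper bound
\begin{equation*}
\int_{S^{d-1}\times\pl C}\bigl(1+\cos(\vphi+\psi)\bigr)\,d\mumu \ \le\ \mathcal{R}(D)+s_{d-1}|E|,
\end{equation*}
which combined with the previous inequality and normalization by $\mathcal{R}(B_1^d)=\tfrac{4b_{d-1}s_{d-1}}{d+1}$ (together with $|E|=b_d(1-\DDD)$) gives exactly the estimate of Theorem~\ref{tt1}. To approach it, split phase space into reflecting and transparent rays. Transparent rays satisfy $v^+=-v$ and $\psi=\vphi$, so the transport integrand reduces to $2\cos^2\vphi$; switching to impact-parameter coordinates with $d\mumu=dv\,dx$ and $\cos\vphi=\sqrt{1-|x|^2}$, each transparent chord has length $\ell=2\cos\vphi\le 2$ and lies inside $E$, so by Fubini
\begin{equation*}
\int_{\text{transparent}}2\cos^2\vphi\,d\mumu\ =\ \int\tfrac{\ell^2}{2}\,dv\,dx\ \le\ \int\ell\,dv\,dx\ \le\ s_{d-1}|E|.
\end{equation*}

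The principal obstacle is the reflecting part of the upper bound, where the pointwise inequality $\cos(\vphi+\psi)\le\langle v,v^+\rangle$ fails: the curvature of $\pl C$ allows the angle between $v$ and $v^+$ to exceed $\vphi+\psi$, a phenomenon absent in the flat-surface roughening setting that defines $m_d$. The excess $\cos(\vphi+\psi)-\langle v,v^+\rangle$ is controlled by pairing the azimuthal tangential components of $v$ and $v^+$ with the free chord length of each reflecting trajectory inside $E$; this is the vector-valued transport problem alluded to in the Introduction, and the resulting combined estimate closes the gap. The technical details are worked out in Section~\ref{sec:proofThm1}.
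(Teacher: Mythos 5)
Your reduction is sound and is in fact equivalent to the paper's argument: the target inequality $\int_{S^{d-1}\times\pl C}\bigl(1+\cos(\vphi+\psi)\bigr)\,d\mumu \le \mathcal{R}(D)+s_{d-1}|E|$, combined with the marginal observation (which makes the billiard-induced coupling admissible for the infimum in \eqref{md}, so no duality is even needed for your first inequality), is exactly \eqref{est} after unwinding the normalization, and it is what the paper obtains by taking $\lam=\hat\lam$ (i.e.\ $\Lam=1$) in the Lagrangian bound \eqref{hatlam}. The problem is that the only genuinely hard step is left unproved. Your "principal obstacle" is precisely the pointwise inequality
\begin{equation*}
1+\cos(\vphi+\psi)\ \le\ \tfrac12\,|v+v^+|^2 + |n_\xi - n_{\xi^+}|
\end{equation*}
for all unit vectors with $\langle v,n_\xi\rangle=\cos\vphi$, $\langle v^+,n_{\xi^+}\rangle=\cos\psi$, which is the content of the minimization \eqref{minK} at $\Lam=1$. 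Establishing it requires the coplanarity-and-ordering Lemma~\ref{lfg} (a nontrivial exchange/rotation argument in $\RRR^d$) followed by the one-variable analysis of $f_{\theta,\Lam}(\eta)=\cos(\theta+2\eta)+2\Lam\sin\eta$ in Lemmas~\ref{l m1}--\ref{l MIN}, showing that for $\Lam\ge1$ the minimum over $\eta\in[0,(\pi-\theta)/2]$ is $\cos\theta$, attained at $\eta=0$. Your proposal only names this excess ("pairing the azimuthal tangential components with the free chord length") and then defers the details to Section~\ref{sec:proofThm1} --- i.e.\ to the very proof you were asked to supply --- so the core of the theorem is missing.

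There are two further repairs needed even granting that inequality. First, your length budget is double-counted: you bound the transparent part by the full $s_{d-1}|E|$ via Fubini, but the reflecting part also consumes trajectory length inside $E$ (through $|n_\xi-n_{\xi^+}|\le l(v,\xi)$), and both must be charged against the single bound $\int l\,d\mumu\le s_{d-1}|E|$; the correct bookkeeping is to integrate one pointwise inequality over all of phase space, as in \eqref{eqD}. Second, for reflecting trajectories the pieces with a fixed incoming direction are broken lines, not disjoint parallel chords, so plain Fubini does not give the length bound; one needs the Santal\'o--Stoyanov formula, which is exactly how the paper controls $\int |n_\xi-n_{\xi^+}|\,d\mumu$. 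Your transparent-ray computation is correct, but it is just the special case $v^+=-v$, $\psi=\vphi$ of the pointwise inequality above, not a substitute for it.
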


\begin{theorem}\label{tt2}
Let a set $D \subset \RRR^d$ be contained in a unit ball. Then
\begin{equation*}
\begin{split}
\text{\rm (a)} \ \  \nvis(D) &\ge \dfrac{1}{2c}\, \DDD^2, \quad \text{where}  \\
c &= \dfrac{8}{d+1}  \Big( \dfrac{b_{d-1}}{b_{d}} \Big)^2\Big[ \dfrac{b_d}{b_{d-1}} \Big( 1 - \dfrac{1}{\sqrt{2}} \Big) + \sqrt{2}\,\dfrac{d-1}{d} + \dfrac{\pi}{2} \Big( \dfrac{\pi}{4} - \dfrac{1}{\sqrt{2}} \Big) \Big]; \\ 
\text{\rm (b)} \ \  \nvis(D) &\ge \dfrac{d(d+1)}{16(d - 1)}\, \Big( \dfrac{b_{d}}{b_{d-1}} \Big)^2 \DDD^2 (1 + o(1)) \quad \text{as} \ \ \DDD \to 0.
\end{split}
\end{equation*}
\end{theorem}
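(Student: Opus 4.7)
The plan is to follow the optimal mass transport (OMT) philosophy used for Theorem~\ref{tt1}, but to refine the linear-in-$(1-\DDD)$ estimate into one that is quadratic in $\DDD$, valid in the small-volume regime.

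The first step is to recast $\mathcal{R}(D)$ as an integral over angular variables. Parameterize each incident ray by its angle $\varphi \in [0, \pi/2]$ with the outward normal to $\pl B_1^d$ at the entry point, and the outgoing ray by $\varphi^+$ at the exit point; integrating out the azimuthal coordinates in the same way as in the derivation leading to \eqref{md} rewrites the visibility as
\begin{equation*}
\FFF(D) = \frac{d+1}{4}\int_{[0, \pi/2]^2}\bigl(1 + \cos(\varphi + \varphi^+)\bigr)\, d\chi(\varphi, \varphi^+),
\end{equation*}
where $\chi$ is a measure on $[0, \pi/2]^2$ whose two marginals are both equal to $\varsigma$. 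The integrand vanishes exactly on the anti-diagonal $\varphi + \varphi^+ = \pi$, which corresponds to through-going rays that miss $D$, so the whole cost is carried by the mass that $\chi$ places off this anti-diagonal.

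The main new ingredient, compared to the proof of Theorem~\ref{tt1}, is an integral-geometric lower bound on the mass that $\chi$ is forced to place off the anti-diagonal, expressed in terms of $\DDD$. A Cauchy--Crofton-type computation shows that the $\mumu$-measure of trajectories intersecting $D$ is at least $C(d)\,\DDD$: the constraint $D \subset B_1^d$ enters crucially because each line through $B_1^d$ has length at most $2$, so Fubini in the form $|D| = \int_{\text{oriented lines}} \text{length}(\ell\cap D)\, d\ell$ (up to a dimensional constant) forces the measure of lines that meet $D$ to be proportional to $|D|$.

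Given this mass bound, the problem reduces to minimizing $\int (1 + \cos(\varphi + \varphi^+))\, d\chi$ over measures $\chi$ with marginals $\varsigma$ and with at least $C\,\DDD$ mass off the anti-diagonal. Near the anti-diagonal the cost behaves like $\tfrac{1}{2}(\pi - \varphi - \varphi^+)^2$, so displacing a mass of order $\DDD$ through a transport distance itself forced to be of order $\DDD$ (by the marginal structure) produces a total cost of order $\DDD^2$. For part~(a) I would carry out this minimization explicitly by splitting the domain $[0,\pi/2]^2$ at $\varphi = \pi/4$; the three summands inside the bracket defining $c$ arise as the exact integrals $(1 - 1/\sqrt{2})\cdot (b_d/b_{d-1})$, $\sqrt{2}(d-1)/d$, and $(\pi/2)(\pi/4 - 1/\sqrt{2})$ over the resulting subregions. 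For part~(b), the cleaner coefficient $\frac{d(d+1)}{16(d-1)}(b_d/b_{d-1})^2$ then emerges by replacing the full cost by its quadratic Taylor expansion at the anti-diagonal, which is the asymptotically dominant regime as $\DDD \to 0$. The main obstacle is the integral-geometric step: a naive Crofton bound controls the mass of deflected trajectories only via the surface area of $D$ (or of its convex hull), which can be much smaller than $\DDD$ for thin, elongated sets; extracting a lower bound genuinely proportional to $\DDD$ itself uses essentially that $D$ lies inside a fixed unit ball, so that generic chords of the ambient ball are long enough to sample the interior of $D$, and the remaining computation is a matter of bookkeeping that produces the specific numerical constants displayed in the statement.
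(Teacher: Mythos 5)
Your starting point contains a genuine error that undermines the whole outline. The displayed identity $\FFF(D)=\frac{d+1}{4}\int_{[0,\pi/2]^2}(1+\cos(\vphi+\vphi^+))\,d\chi$ is false for general $D\subset B_1^d$: the exact integrand is $\frac12|v+v^+|^2=1+\langle v,v^+\rangle$, and this depends not only on the angles $\vphi,\psi$ between the velocities and the normals at the entry and exit points, but also on the angle $2\eta$ between those two normals (and on the azimuthal configuration). Only after minimizing over admissible configurations does one get the expression $1+\cos(\vphi+\psi+2\eta)$, and the cost $1+\cos(\vphi+\psi)$ corresponds to $\eta=0$, i.e.\ to exit point coinciding with entry point; that is exactly the roughening-limit formula \eqref{md} of \cite{UMN09}, valid only as $\DDD\to1$. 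For a small body most rays pass straight through with zero cost while $\vphi^+=\vphi$ and $1+\cos(2\vphi)>0$, so your formula grossly overestimates $\FFF(D)$ (and your geometric reading is also off: within $[0,\pi/2]^2$ the set $\vphi+\vphi^+=\pi$ is a single corner point, and through-going chords sit on the diagonal $\psi=\vphi$, their zero cost being accounted for by $\vphi+\psi+2\eta=\pi$, not by the $(\vphi,\psi)$-integrand vanishing). Any correct argument must keep track of the normal displacement $|n-n^+|$; this is precisely what the paper does through the pair $F_1,F_2$ and the kernel $K_\lam$.

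The second gap is the passage from ``mass of deflected rays $\ge C(d)\,\DDD$'' to a cost of order $\DDD^2$. A Crofton/Fubini bound of that type is fine, but it cannot be combined with a pointwise quadratic cost estimate, because an individual deflected ray can have \emph{zero} cost: if $v^+=-v$ (the ray exits parallel to its entry direction, laterally displaced, as with two parallel mirrors), the integrand vanishes even though the ray met $D$. Your claim that ``the marginal structure forces a transport distance of order $\DDD$'' is exactly the missing ingredient and is not justified. The paper closes this gap differently: the volume constraint enters through the Santal\'o--Stoyanov phase-volume inequality $s_{d-1}b_d(1-\DDD)\ge\int l\,d\mumu\ge\int|n-n^+|\,d\mumu$, which couples $\DDD$ to the exit-point displacement; then, for each $\lam>0$, one bounds $\nvis(D)-\lam\DDD$ below by a transport problem with kernel $K_\lam(\vphi,\psi)$, minimizes over $\eta$ (Lemmas \ref{lfg}--\ref{l MIN}), bounds $1+\varkappa_\Lam(\theta)\ge 2\Lam\cos\big(\frac{\theta+\arcsin\Lam}{2}\big)$, and uses the concave-cost transport Proposition \ref{utv3} (optimal plan supported on the diagonal) to evaluate the infimum explicitly, obtaining $\nvis(D)-\lam\DDD\ge-\frac{c}{2}\lam^2$; optimizing $\lam=\DDD/c$ gives (a), and the small-$\lam$ asymptotics of the same bound give (b). None of these steps (the $\eta$-variable, the Santal\'o coupling, the concavity/diagonal-support argument, the Lagrangian optimization in $\lam$) appears in your sketch, so the quadratic dependence on $\DDD$ and the specific constants are not actually derived.
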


In particular
$$
\nvis(D) \ge 0.358\, \DDD^2 \ \ \, \text{for $d=2$ \ \ and} \ \ \ \nvis(D) \ge 0.367\, \DDD^2 \ \ \, \text{for $d=3$},
$$ 
and additionally, when $\DDD \to 0$,
$$
\nvis(D) \ge \dfrac{3\pi^2}{32}\, \DDD^2 (1 + o(1)) \ \ \, \text{for $d=2$ \ \ and} \ \ \ \nvis(D) \ge \dfrac{2}{3}\, \DDD^2 (1 + o(1)) \ \ \, \text{for $d=3$}.
$$

\begin{figure}[ht]
\begin{overpic}[
width=0.45\textwidth]{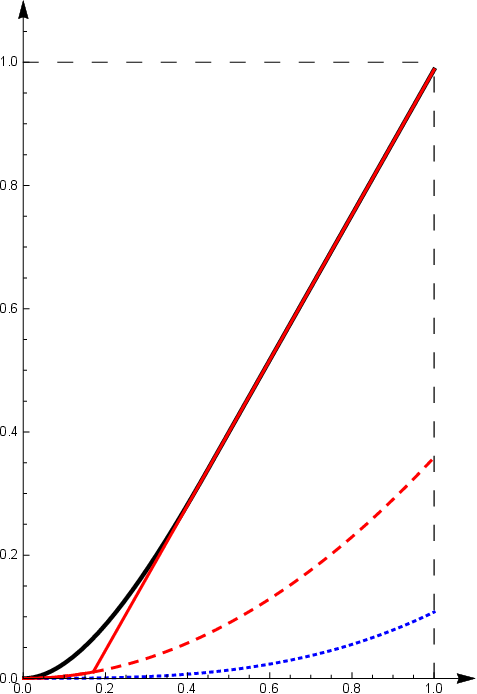}
\put(7,95){$\inf\nvis$}
\put(66,6){$[D]$}
\put(35,70){$d=2$}
\end{overpic}
\qquad
\begin{overpic}[
width=0.45\textwidth]{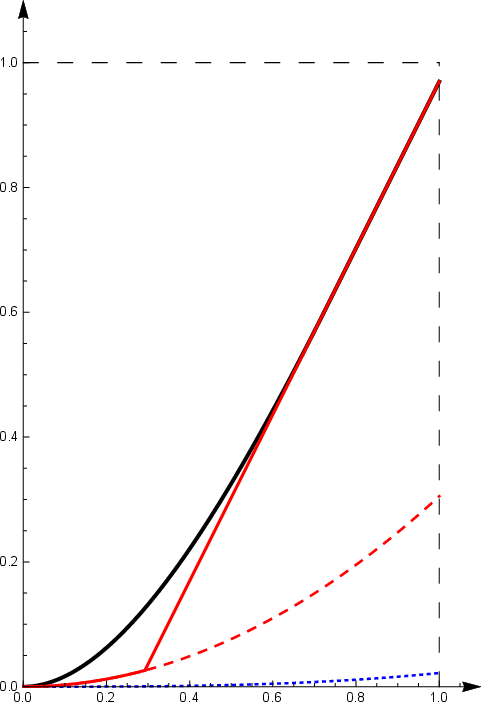}
\put(7,95){$\inf\nvis$}
\put(66,6){$[D]$}
\put(35,70){$d=3$}
\end{overpic}

\caption{The lower estimate for $\inf\nvis$ obtained in \cite{camouflage} (Theorem \ref{t2}) is the dotted (blue) line, while the lower estimate given by Theorems \ref{tt1} and \ref{tt2} combined is the thin solid (red) line. The lower estimate from Theorem~\ref{tt2} is the dashed (red) line. The bound obtained from the numerical approximation based on Theorem \ref{t5} is shown in thick solid black.
}
\label{figGraphs}
\end{figure}

\begin{zam}
Theorems \ref{tt1} and \ref{tt2} together provide much better estimates of the index of visibility that Theorems \ref{t1} and \ref{t2}. For $\DDD$ small the estimates are proportional to $\DDD^2$, rather than $\DDD^3$, and in the limit $\DDD \to 1$ the estimates coincide with those found in \cite{UMN09} and are approximately 9 times (for $d=2$) and 44 times (for $d=3$) greater than those obtained in Theorems \ref{t1} and \ref{t2}.
\end{zam}

\begin{zam}\label{zamopen}
There is an interesting open question:
\begin{quote}
{\rm Is it true that} \quad $\inf_{[D]=1} \FFF(D) = 0?$
\end{quote}
Note that we do not impose here the condition that the admissible bodies $D$  are contained in a fixed ball. (Recall, however, that they are bounded.) In other words, can one achieve arbitrarily small visibility of a certain mass by re-distributing it in a sufficiently large space?

Positive answer to this question would mean that there exists a family of bodies $D_r \subset B_r^d$,\, $r > 0$ with $\big[ D_r \big] = 1$ such that $\FFF(D_r) \to 0$ as $r \to \infty$. Each body $D_r/r$ is then contained in the unit ball $B_1^d$ and additionally,
$$
\Big[ \frac{D_r}{r} \Big] = \frac{1}{r^d} \qquad \text{and} \qquad \FFF\Big( \frac{D_r}{r} \Big) = \frac{\FFF(D_r)}{r^{d-1}},
$$
and therefore,
$$
\inf_{[D]=a,\, D\subset B_1^d} \FFF(D) = o(a^{\frac{d-1}{d}}) \qquad \text{as} \ \ a \to 0.
$$
However, the only upper bound for the infimum we are able to provide at the moment, is the trivial one:
$$\inf_{[D]=a,\, D\subset B_1^d} \FFF(D) \le m_d \, a^{\frac{d-1}{d}}.
$$
\end{zam}

In addition to the theoretical estimates in Theorems~\ref{tt1} and~\ref{tt2}, we also performed basic numerical experiments, in which a discretised version of the problem \eqref{infi} is solved using a linear programming formulation. The numerical solutions that we obtained are shown in Fig.~\ref{figGraphs}. We explain our approach in Section~\ref{sec:numeric}, where we also discuss the solutions to the linear programming problem that approximate the optimal measure $\chi$.

\section{Proof of Theorem \ref{tt1}}\label{sec:proofThm1}

\subsection{Auxiliary problem of optimal mass transfer}

The phase volume (with respect to the standard Liouville measure $dv dx$) of the billiard in $C \setminus D$ equals
$$
V = |S^{d-1} \times (C \setminus D)| = s_{d-1} (|C| - |D|). 
$$
Let $l(v,\xi) = l_{D,C}(v,\xi)$ be the length of the piece of the billiard trajectory with the initial velocity and initial position $(-v,\xi) \in S^{d-1} \times \pl C$ and with the final point being the point of the next intersection with $\pl C$. 
Later on we will use the so-called {\it Santal\'o-Stoyanov formula}
\beq\label{S-S}
V \ge \int_{S^{d-1}\times\pl C} l(v,\xi)\, d\mumu(v,\xi).
\eeq
This formula states that the phase volume is greater than or equal to the integral of the lengths of billiard trajectories over the initial data.
(The sign $">"$ in the formula is realized when a part of the phase space with positive measure is not accessible for particles starting from $\pl C$). 

In the particular case when $D$ is absent, $D = \emptyset$, the inequality in \eqref{S-S} becomes equality, and is called {\it Santal\'o's formula}. Its derivation and corresponding discussion and references can be found, e.g., in Section 2 of the paper \cite{Chernov} (formula (2.4) in that paper). Santal\'o's formula was generalized by Stoyanov in 
\cite{stoyanov} to the case of nonempty $D$; see Corollary 1.3 in \cite{stoyanov}.

Take $C = B_1^{d}$ and denote
\beq\label{mu}
\mu = \mu_d := \frac{1}{s_{d-1} b_{d-1}}\mumu_{S^{d-1}}.
 \eeq
One easily checks that $\mu$ is a probability measure on $(S^{d-1})^2$. By $\Gam_{\mu,\mu}$ we denote the set of measures $\nu$ on $(S^{d-1})^4 = (S^{d-1})^2 \times (S^{d-1})^2$ such that both marginals of $\nu$ coincide with $\mu$. In other words, $\Gam_{\mu,\mu}$ is the set of measures $\nu$ on $(S^{d-1})^4$ satisfying the following condition: for any Borel set $A \subset (S^{d-1})^2$,
$$
\nu(A \times (S^{d-1})^2) = \nu((S^{d-1})^2 \times A) = \mu(A).
$$
A measure $\nu \in \Gam_{\mu,\mu}$ is called a {\it transport plan} from the measure space $((S^{d-1})^2,\, \mu)$ onto itself.

Since for $\pl C = S^{d-1}$, each point $\xi \in \pl C$ coincides with $n_\xi$, from now on we write $n$ in place of $\xi$.

For any $D \subset S^{d-1}$ the map $T = T_{D,B_1^{d}}$ induces the transport plan $\nu_D \in \Gam_{\mu,\mu}$ defined as follows: for any Borel set $A \subset (S^{d-1})^4$,\ $\nu_D(A) := \mu\big( \{ (v,n) : ((v,n), T(v,n)) \in A \} \big)$.

Using \eqref{RD}, \eqref{norm}, and \eqref{mu}, one gets
\beq\label{eqF} \begin{split}
\nvis(D) &= \frac{d+1}{4} \int_{(S^{d-1})^2} \frac 12\, |v + v^+_{D,B_1^d}(v,n)|^2\, d\mu(v,n)
\\
&= \frac{d+1}{4} \int_{(S^{d-1})^4} \frac{|v + v^+|^2}{2}\, d\nu_D(v,n,v^+,n^+).
\end{split}  \eeq

Using the obvious inequality $l(v,\nnn) \ge |\nnn - \nnn_{D}^+(v,\nnn)|$, by formula \eqref{S-S} we obtain
$$
V \ge \int_{(S^{d-1})^2} |\nnn - \nnn_{D}^+(v,\nnn)|\, d\mumu(v,n) = s_{d-1} b_{d-1} \int_{(S^{d-1})^4} |\nnn - \nnn^+|\, d\nu_D(v,n,v^+,n^+).
$$
Note that the phase volume of the billiard in $B_1^d \setminus D$ equals
$$
V = s_{d-1} (b_d - |D|) = s_{d-1} b_d \big(1 - \DDD\big),
$$
hence
\beq\label{eqD}
\DDD \le 1 - \frac{b_{d-1}}{b_d} \int_{(S^{d-1})^4} |n - n^+|\, d\nu_D(v,n,v^+,n^+).
\eeq

Now consider the following auxiliary problem of optimal mass transport, which is closely connected with our problem of minimal visibility:
 \begin{quote}
\text{ for all $\lam > 0$ find} \ $\inf \{ F_2(\nu) - \lam F_1(\nu) : \ \nu \in \Gam_{\mu,\mu} \}$,
 \end{quote}
 where
\beq\label{F1}
F_1(\nu) = 1 - \frac{b_{d-1}}{b_d} \int_{(S^{d-1})^4} |n_1 - n_2|\, d\nu(v_1,n_1,v_2,n_2)
\eeq
and
\beq\label{F2}
F_2(\nu) = \frac{d+1}{4} \int_{(S^{d-1})^4} \frac{|v_1 + v_2|^2}{2}\, d\nu(v_1,n_1,v_2,n_2).
\eeq

Note in passing that for $\lam = 0$ the infimum equals zero and is attained at each admissible measure $\nu$ supported on the subspace $v_1 + v_2 = 0$.

The above problem can be viewed as a part of the following vector-valued optimal mass transport problem. Denote $F(\nu) = (F_1(\nu), F_2(\nu))$. It is required to find the convex planar set $F(\Gam_{\mu,\mu}) = \{ F(\nu) : \, \nu \in \Gam_{\mu,\mu} \} \subset \RRR^2$. This problem, however, will not be considered here in full.

By \eqref{eqF} and \eqref{eqD} one has $\DDD \le F_1(\nu_D)$ and $\nvis(D) = F_2(\nu_D)$, and therefore,
\beq\label{ineqF}
\begin{split}
\inf_{D \subset S^{d-1}} \big( \nvis(D) - \lam\DDD \big) \ge \inf_{\nu\in\Gam_{\mu,\mu}} \big( F_2(\nu) - \lam F_1(\nu) \big)\\
=  \inf_{\nu\in\Gam_{\mu,\mu}} \int_{(S^{d-1})^4} \left[ \frac{d+1}{4}\, \frac{|v_1 + v_2|^2}{2} + \lam \Big( \frac{b_{d-1}}{b_d}\, |n_1 - n_2| - 1 \Big) \right] d\nu(v_1,n_1,v_2,n_2).
\end{split}
\eeq

Now for each quadruple of vectors $v_1,\, n_1,\, v_2,\, n_2$ denote by $2\eta$ the angle between $n_1$ and $n_2$, by $\vphi$ the angle between $n_1$ and $v_1$, and by $\psi$ the angle between $n_2$ and $v_2$; see Fig.~\ref{fig:angles}.
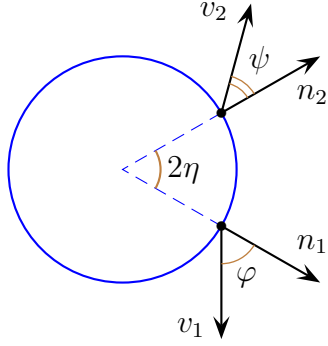
\begin{figure}
\begin{picture}(0,110)
\rput(7.5,2){
\scalebox{1}{
\pscircle[linecolor=blue](0,0){1.5}


\rput(0.9,-2.1){\scalebox{1}{$v_1$}}
\rput(1.2,2.1){\scalebox{1}{$v_2$}}
\rput(2.5,0.99){\scalebox{1}{$n_2$}}
\rput(2.5,-0.99){\scalebox{1}{$n_1$}}
\psline[linecolor=blue,linewidth=0.4pt,linestyle=dashed](1.3,0.75)(0,0)(1.3,-0.75)
\psline[arrows=->,arrowscale=2,linewidth=0.8pt](1.3,0.75)(2.6,1.5)
\psline[arrows=->,arrowscale=2,linewidth=0.8pt](1.3,0.75)(1.69,2.2)
\psline[arrows=->,arrowscale=2,linewidth=0.8pt](1.3,-0.75)(2.6,-1.5)
\psline[arrows=->,arrowscale=2,linewidth=0.8pt](1.3,-0.75)(1.3,-2.25)
\psdots(1.3,0.75)(1.3,-0.75)
\psarc[linewidth=0.5pt,linecolor=brown](1.3,0.75){0.4}{30}{75}
\psarc[linewidth=0.5pt,linecolor=brown](1.3,0.75){0.5}{30}{75}
\psarc[linewidth=0.5pt,linecolor=brown](1.3,-0.75){0.5}{-90}{-30}
\psarc[linewidth=0.8pt,linecolor=brown](0,0){0.5}{-30}{30}
\rput(0.8,0){\scalebox{1}{$2\eta$}}
\rput(1.63,-1.43){\scalebox{1}{$\vphi$}}
\rput(1.8,1.45){\scalebox{1}{$\psi$}}

}
}
\end{picture}
\caption{The vectors $n_1$,\, $n_2$,\, $v_1$,\, $v_2$.}
\label{fig:angles}
\end{figure}
That is, $\eta = \frac 12 \arccos \langle n_1, n_2 \rangle$,\, $\vphi = \arccos \langle n_1, v_1 \rangle$, and $\psi = \arccos \langle n_2, v_2 \rangle$. These angles lie in  $[0,\, \pi/2]$.
Denote
\beq\label{minK}
K_\lam(\vphi, \psi) = K_\lam^d(\vphi, \psi) := \!\!\!\! \inf_{\substack{\langle v_1, n_1\rangle = \cos\vphi\\ \langle v_2, n_2\rangle = \cos\psi\\ |v_1|=|n_1|=|v_2|=|n_2|=1}}\!\!\!\!\!
\left[ \frac{d+1}{4}\, \frac{|v_1 + v_2|^2}{2} + \lam \Big( \frac{b_{d-1}}{b_d}\, |n_1 - n_2| - 1 \Big) \right].
\eeq
An evaluation of $K_\lam$ will be given later on in formulae \eqref{Klamb}, \eqref{VkLamTheta}, and \eqref{etaLam}.

Recall that the measure $\varsigma = \varsigma_d$ is defined by formula \eqref{varsigma}.
Define the map $\Upsilon$ from $(S^{d-1})^4$ to $[0,\, \pi/2]^2$ by
$$
\Upsilon(v_1, n_1, v_2, n_2) = (\arccos \langle n_1, v_1 \rangle, \ \arccos \langle n_2, v_2 \rangle).
$$
The corresponding push-forward map takes each measure $\nu \in \Gam_{\mu,\mu}$ to the measure $\chi = \Upsilon\#\nu \in \Gam_{\varsigma,\varsigma}$, that is, $\chi$ is defined on the square $[0,\, \pi/2]^2$, and both projections of $\chi$ on the sides of the square coincide with $\varsigma$. It follows that
\beq\label{ineqVK}
\inf_{\nu\in\Gam_{\mu,\mu}} \big( F_2(\nu) - \lam F_1(\nu) \big) \ge
\inf_{\chi\in\Gam_{\varsigma,\varsigma}}  \int_{[0,\, \pi/2]^2} K_{\lam}(\vphi,\psi)\, d\chi(\vphi,\psi).
\eeq
The inequality in \eqref{ineqF} together with \eqref{minK} and \eqref{ineqVK} provides a relatively easy way to find a lower bound for resistance. Unfortunately, we do not know whether this bound is sharp. We believe that the inequality \eqref{ineqVK} is sharp, while the inequality in \eqref{ineqF} is not.

\subsection{Estimating the value $K_\lam(\vphi, \psi)$}

Now we are going to study the integrand $K_\lam(\vphi, \psi)$. To that end consider the auxiliary problem: given $\varphi,\, \psi \in [0,\, \pi/2]$, find
\beq\label{min fg}
\inf_{\substack{\langle v_1, n_1\rangle = \cos\vphi\\ \langle v_2, n_2\rangle = \cos\psi\\ 
v_1,n_1,v_2,n_2\in S^{d-1}
}}\!\!\!\!\!\!\! \PP(v_1, n_1, v_2, n_2),\ \text{ where} \, \
\PP(v_1, n_1, v_2, n_2) =  -f(|v_1 - v_2|) + g(|n_1 - n_2|)
\eeq
with $f$ and $g$ being strictly monotone increasing functions defined on $\RRR_0^+ := [0,\, +\infty)$. 
The value $K_\lam(\vphi, \psi)$
equals the infimum in \eqref{min fg} when $f(x) = \dfrac{d+1}{4} \Big(\dfrac{x^2}{2}-2\Big)$ and $g(x) = \dfrac{\lam b_{d-1}}{b_d}\, x - \lam$.

\begin{lemma}\label{lfg}
Let the quadruple $\{ v_1,\, n_1,\, v_2,\, n_2 \}$ be a solution of problem \eqref{min fg}.
Then they are coplanar. Moreover, they lie in a closed half-plane\footnote{A set of vectors of the form $\RRR u + \RRR_0^+ u^\bot$, where $u$ and $u^\bot$ are nonzero and orthogonal, and $\RRR_0^+$ is the set of nonnegative real numbers.}, and the order of vectors in a certain direction is $v_1,\, n_1,\, n_2,\, v_2$  (some of the vectors may coincide).
\end{lemma}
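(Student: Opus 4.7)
The plan is to prove coplanarity and the half-plane ordering in two stages.

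I would first establish coplanarity via Lagrange multipliers applied to the sub-problem in which $n_1,n_2$ are held fixed and only $v_1,v_2$ vary. Since $g(|n_1-n_2|)$ is then constant, minimising $\Phi$ in this sub-problem is equivalent to minimising $\langle v_1,v_2\rangle$ subject to $|v_i|^2=1$ and $\langle v_i,n_i\rangle=\cos\varphi_i$ (with $\varphi_1=\varphi$, $\varphi_2=\psi$). The normals to these two constraints span $\mathrm{span}(v_i,n_i)$, and stationarity forces $v_2\in\mathrm{span}(v_1,n_1)$ together with $v_1\in\mathrm{span}(v_2,n_2)$. In the non-degenerate range $\varphi,\psi\in(0,\pi/2]$ each pair $(v_i,n_i)$ is linearly independent, so both spans coincide with a single $2$-dimensional plane $W$, which must contain all four vectors. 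The degenerate cases $\varphi=0$ or $\psi=0$ force $v_i=n_i$ and can be checked directly.

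Working inside $W\cong\mathbb{R}^2$, I parameterise by angles: place $n_1$ at angle $-\eta$ and $n_2$ at angle $+\eta$ for some $\eta\in[0,\pi/2]$; the constraints then give $v_1$ at angle $-\eta\pm\varphi$ and $v_2$ at angle $+\eta\pm\psi$, yielding four sign combinations. The choice $v_1=-\eta-\varphi$, $v_2=\eta+\psi$ (the ``outward'' choice) realises the required ordering $v_1,n_1,n_2,v_2$, and all four vectors lie in a closed half-plane precisely when $2\eta+\varphi+\psi\le\pi$.

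The crucial step is to show that any configuration can be replaced, without increasing $\Phi$, by the outward configuration with $\tilde\eta:=\min\{\eta,\,(\pi-\varphi-\psi)/2\}$. For the four sign choices the signed angular gap between $v_1$ and $v_2$ is $\Delta=2\eta+\epsilon_1\varphi+\epsilon_2\psi$ with $\epsilon_i\in\{\pm 1\}$; since $\varphi,\psi\ge 0$, an elementary check gives $|\Delta|\le 2\eta+\varphi+\psi$. If $2\eta+\varphi+\psi\le\pi$, set $\tilde\eta=\eta$; monotonicity of $\sin(\cdot/2)$ on $[0,\pi]$ then yields $|v_1-v_2|=2\sin(|\Delta|/2)\le 2\sin((2\eta+\varphi+\psi)/2)$, so the outward choice maximises $|v_1-v_2|$ among the four sign choices while $|n_1-n_2|$ is unchanged. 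If $2\eta+\varphi+\psi>\pi$, set $\tilde\eta=(\pi-\varphi-\psi)/2<\eta$; the outward configuration at $\tilde\eta$ then makes $v_1,v_2$ antipodal, so $|v_1-v_2|=2$ is maximal, while $|n_1-n_2|=2\sin\tilde\eta<2\sin\eta$. Since $f$ and $g$ are strictly increasing, $\Phi$ does not increase under this replacement; at an optimum the replaced configuration is therefore also optimal and has the stated outward half-plane form.

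The main obstacle is the second stage: one needs a single replacement recipe that simultaneously handles all four sign choices and both regimes $2\eta+\varphi+\psi\le\pi$ and $2\eta+\varphi+\psi>\pi$, together with the degenerate boundary values of $\eta,\varphi,\psi$ where several chords coincide. The bookkeeping---verifying $|\Delta|\le 2\eta+\varphi+\psi$ case by case and tracking where $\sin(\cdot/2)$ ceases to be monotone---is the most delicate part.
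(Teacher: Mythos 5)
Your stage-one argument has a genuine hole at $v_1=\pm v_2$. The Lagrange conditions give $v_2\in\mathrm{span}(v_1,n_1)$ and $v_1\in\mathrm{span}(v_2,n_2)$, but the inference that ``both spans coincide with a single $2$-dimensional plane $W$'' requires $v_1$ and $v_2$ to be linearly independent: if $v_2=-v_1$, the two planes need only share the line $\RRR v_1$, and the stationarity conditions are satisfied by non-coplanar quadruples (put $n_1$ and $n_2$ in different planes through $v_1$, at angles $\vphi$ to $v_1$ and $\psi$ to $-v_1$). This case is not exotic: $v_1=-v_2$ is exactly the configuration favoured when maximising $|v_1-v_2|$, and for general strictly increasing $f,g$ (the lemma is stated for all such $f,g$, e.g.\ $g$ growing very slowly) it genuinely occurs at the optimum. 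To close the gap you must invoke optimality in the \emph{other} pair of variables: with $v_1=-v_2$ fixed, the pair $(n_1,n_2)$ must minimise $|n_1-n_2|$ subject to the two angle constraints; writing $n_1=\cos\vphi\,v_1+\sin\vphi\,v_1^\perp$ and $n_2=-\cos\psi\,v_1+\sin\psi\,v_2^\perp$ this forces $v_1^\perp=v_2^\perp$ when $\vphi,\psi\neq0$, hence coplanarity. The paper devotes a separate paragraph to precisely this case; without it your plane $W$ is not even defined and stage two cannot start.

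A second, smaller issue is that your closing sentence proves only that the \emph{replaced} configuration is optimal and outward, whereas the lemma asserts the stated form for the given solution. Since $f$ and $g$ are strictly increasing, your comparisons are in fact strict away from ties ($\eta=0$, or $\vphi=0$ or $\psi=0$), so the original quadruple is itself forced to be outward except in those degenerate situations, which then require the coincidence analysis the paper carries out in its cases (i)--(iv); this is the bookkeeping you flag but do not do. Modulo these two points, your route --- stationarity for coplanarity plus a single planar chord comparison over the four sign choices together with the $\tilde\eta$ truncation --- is a legitimate and arguably more streamlined alternative to the paper's chain of reflection/rotation perturbation arguments.
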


\begin{proof}
Consider a minimizer $\{ \bar v_1,\, \bar n_1,\, \bar v_2,\, \bar n_2 \}$. Recall that the angle between $\bar n_1$ and $\bar n_2$ is denoted as $2\eta$, that is, $\eta = \frac 12 \arccos \langle \bar n_1, \bar n_2 \rangle \in [0,\, \pi/2]$. Let us first prove that $\eta \le (\pi - \vphi - \psi)/2$. 

Indeed, assume the opposite and compare $\{ \bar v_1,\, \bar n_1,\, \bar v_2,\, \bar n_2 \}$ with a coplanar admissible quadruple of vectors $\{ v_1,\, n_1,\, v_2,\, n_2 \}$ such that $v_2 = -v_1$, and additionally, the vectors lie in a closed half-plane and follow in the order $v_1,\, n_1,\, n_2,\, v_2$. To find such a quadruple, it suffices to take two mutually orthogonal unit vectors $v_1$ and $v_1^\bot$ and denote $n_1 = \cos\vphi\, v_1 + \sin\vphi\, v_1^\bot$ and $n_2 = -\cos\psi\, v_1 + \sin\psi\, v_1^\bot$. 

One has $|v_1-v_2| = 2 \ge |\bar v_1 - \bar v_2|$ and $0 \le \pi - \vphi - \psi < 2\eta$, hence
$$
|n_1-n_2| = \sqrt{2 - 2\langle n_1,\, n_2 \rangle} =
\sqrt{2 - 2\cos(\pi-\vphi-\psi)} < \sqrt{2 - 2\cos(2\eta)} = |\bar n_1- \bar n_2|.
$$
It follows that $\PP(v_1, n_1, v_2, n_2) < \PP(\bar v_1, \bar n_1, \bar v_2, \bar n_2)$, in contradiction with optimality of $\{ \bar v_1,\, \bar n_1,\, \bar v_2,\, \bar n_2 \}$.

Thus, it is proved that 
$$
2\eta + \vphi + \psi \le \pi.
$$

Select vectors $v_1$ and $v_2$ so as the quadruple $\{ v_1,\, \bar n_1,\, v_2,\, \bar n_2 \}$ is admissible, the four vectors belong to a half-plane and follow in the prescribed order: $v_1,\, \bar n_1,\, \bar n_2,\, v_2$. If $\eta \ne 0$, this pair of vectors $v_1$,\, $v_2$ is unique.

Take the line segment with the endpoints $v_1$ and $v_2$ (that is, the convex hull of $v_1$ and $v_2$). This segment intersects $\mathbb R_0^+ \bar n_1$ and $\mathbb R_0^+ \bar n_2$ at, say, $\bar n_1'$ and $\bar n_2'$, respectively. The length of the segment is equal to $|v_1 - v_2|$ and also to the length of the broken line $\bar v_1 \bar n_1 \bar n_2 \bar v_2$, which in turn is greater than or equal to $|\bar v_1 - \bar v_2|$. Thus, we have $|v_1 - v_2| \ge |\bar v_1 - \bar v_2|$, and therefore, 
\beq\label{PPhi}
\PP(v_1, \bar n_1, v_2, \bar n_2) \le \PP(\bar v_1, \bar n_1, \bar v_2, \bar n_2),
\eeq
and the equality is achieved only when $\bar v_1 = v_1$ and $\bar v_2 = v_2$. Due to optimality of the quadruple $\{ \bar v_1,\, \bar n_1,\, \bar v_2,\, \bar n_2 \}$, the equality in \eqref{PPhi} takes place, and therefore, the vectors of the quadruple belong to a half-plane and follow in the prescribed order.
\end{proof}

Let $\vphi,\, \psi \in [0,\, \pi/2]$. By Lemma \ref{lfg}, if the quadruple $\{ v_1, n_1, v_2, n_2 \}$ minimizes the right hand side of \eqref{minK} and $\eta$ is one half of the angle between $n_1$ and $n_2$ then $\vphi + 2\eta + \psi \le \pi$, and the infimum in \eqref{minK} equals
$$
\frac{d+1}{4}\, \frac{|v_1 + v_2|^2}{2} + \lam \Big( \frac{b_{d-1}}{b_d}\, |n_1 - n_2| - 1 \Big) =
\frac{d+1}{4} \big( 1 + \cos(\vphi + \psi + 2\eta) \big) + \lam\Big( 2\frac{b_{d-1}}{b_d}\, \sin\eta - 1 \Big).
$$
It remains to minimize the latter expression with respect to $\eta$, having fixed $\vphi + \psi = \theta$. Denote
\beq\label{Lam_f}
\Lam = \dfrac{4}{d+1}\, \dfrac{b_{d-1}}{b_d}\, \lam \quad \text{and} \quad f(\eta) = f_{\theta,\Lam}(\eta) = \cos(\theta + 2\eta) + 2\Lam \sin\eta;
\eeq
then problem \eqref{minK} amounts to the following one:
\beq\label{probl2min}
\text{for each} \ \ \theta \in [0,\, \pi] \ \ \text{and} \ \ \Lam > 0 \ \ \text{find} \ \ \varkappa_\Lam(\theta) := \inf_{\eta \in [0,\, (\pi-\theta)/2]} f_{\theta,\Lam}(\eta).
\eeq
We then have
\beq\label{Klamb}
K_\lam(\vphi, \psi) = \dfrac{d+1}{4}\, \Big[ 1 + \varkappa_\Lam(\vphi+\psi) - \dfrac{b_{d}}{b_{d-1}} \Lam \Big].
\eeq 
The function $\varkappa_\Lambda$ will be further evaluated in formulae \eqref{VkLamTheta} and \eqref{etaLam}.

\subsection{Estimating the function $\varkappa_\Lam$ and finishing the proof of Theorem \ref{tt1}}

Now we are going to find the minimizer(s) in \eqref{probl2min} and to describe the function $\varkappa_\Lam$.

If $\theta = 0$ or $\theta = \pi$, the minimizer in \eqref{probl2min} is easy to find. In the former case, the unique minimizer is $\eta = \pi/2$, if $\Lam < 1$, and $\eta = 0$, if $\Lam > 1$, and there are two minimizers $\eta = 0$ and $\pi/2$, if $\Lam = 1$. In the latter case the unique minimizer is $\eta = 0$.

For the intermediate values $0 < \theta < \pi$ the answer is given by the following Lemmas \ref{l m1} and \ref{l MIN}.

Note that if a minimizer $\eta$ lies in the interior of the segment $[0,\, (\pi-\theta)/2]$ then it satisfies the equation $f'(\eta) = 0$, which is equivalent to
\beq\label{eqs}
\Lam = \frac{\sin(\theta + 2\eta)}{\cos\eta}.
\eeq

\begin{lemma}\label{l m1}
If $0 < \Lam \le 1$ and $0 < \theta \le \pi - \arcsin\Lam$, there is a unique solution $\eta$ of equation \eqref{eqs} in the interval $\big[ (\pi/2-\theta)_+,\ (\pi-\theta)/2 \big]$.
\end{lemma}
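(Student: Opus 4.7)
The plan is to study the function
\begin{equation*}
h(\eta) := \frac{\sin(\theta + 2\eta)}{\cos\eta},
\end{equation*}
so that equation \eqref{eqs} becomes $h(\eta) = \Lam$, and to prove that $h$ is continuous and \emph{strictly decreasing} on $I := \big[(\pi/2-\theta)_+,\ (\pi-\theta)/2\big]$ with $h \ge \Lam$ at the left endpoint and $h = 0$ at the right endpoint. Existence and uniqueness of the solution will then follow from the intermediate value theorem together with strict monotonicity.

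First I would derive the identity
\begin{equation*}
h(\eta) = 2\sin(\theta + \eta) - \sin\theta\,\sec\eta,
\end{equation*}
obtained by expanding $\sin(\theta + 2\eta)$ via the double-angle formula $\sin(\theta+2\eta) = \sin\theta(2\cos^2\eta - 1) + 2\cos\theta\sin\eta\cos\eta$ and grouping the terms. Differentiating,
\begin{equation*}
h'(\eta) = 2\cos(\theta + \eta) - \sin\theta\,\sec\eta\,\tan\eta.
\end{equation*}
One checks that as $\eta$ ranges over $I$, the quantity $\theta + \eta$ ranges in $[\pi/2,\ (\pi+\theta)/2] \subseteq [\pi/2,\ \pi]$, so $\cos(\theta+\eta) \le 0$; meanwhile for $\theta \in (0,\pi)$ and $\eta \in [0,\pi/2)$ we have $\sin\theta > 0$ and $\sec\eta,\tan\eta \ge 0$. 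Hence both terms of $h'(\eta)$ are nonpositive, and in the interior of $I$ at least one is strictly negative, yielding strict monotonicity.

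Next I would evaluate $h$ at the endpoints. At $\eta = (\pi-\theta)/2$ one has $\theta + 2\eta = \pi$, so $h = 0 < \Lam$. At the left endpoint, two cases arise. If $\theta \le \pi/2$ the endpoint is $\pi/2 - \theta$, and substituting gives $h = \sin(\pi-\theta)/\sin\theta = 1 \ge \Lam$ since $\Lam \le 1$. If $\theta \ge \pi/2$ the endpoint is $0$ and $h(0) = \sin\theta$; the hypothesis $\theta \le \pi - \arcsin\Lam$ together with monotonicity of $\sin$ on $[\pi/2,\pi]$ gives $\sin\theta \ge \sin(\pi - \arcsin\Lam) = \Lam$. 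Combining continuity, strict monotonicity, $h(\text{left}) \ge \Lam$ and $h(\text{right}) = 0 < \Lam$, there is exactly one $\eta \in I$ solving $h(\eta) = \Lam$.

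The main obstacle is the monotonicity step: written in the naive form $\sin(\theta + 2\eta)/\cos\eta$, both numerator and denominator are decreasing on $I$, so the sign of $h'$ is not transparent. The key trick is the rewriting $h = 2\sin(\theta+\eta) - \sin\theta\sec\eta$, which separates the two competing behaviors and makes the sign of $h'$ manifest from the observation that $\theta + \eta$ crosses $\pi/2$ precisely at the left endpoint of $I$, so $\cos(\theta + \eta)$ is nonpositive throughout.
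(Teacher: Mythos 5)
Your proof is correct, and it reaches the conclusion by the same overall strategy as the paper --- studying the one-variable function $\Lambda_\theta(\eta)=\sin(\theta+2\eta)/\cos\eta$ and combining monotonicity with endpoint values and the intermediate value theorem --- but the key monotonicity step is handled by a genuinely different device. The paper computes $\Lambda_\theta'(\eta)=\gamma(\eta)/\cos^2\eta$ with $\gamma(\eta)=2\cos(\theta+2\eta)\cos\eta+\sin(\theta+2\eta)\sin\eta$, shows $\gamma$ is strictly decreasing, and then splits into the cases $0<\theta<\pi/2$ (where $\Lambda_\theta$ is increasing then decreasing on all of $[0,(\pi-\theta)/2]$ and the special values $\Lambda_\theta\big(\tfrac{\pi/2-\theta}{3}\big)=\Lambda_\theta(\pi/2-\theta)=1$ are invoked) and $\pi/2\le\theta\le\pi-\arcsin\Lambda$ (where $\Lambda_\theta$ decreases throughout). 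You instead confine attention to the target interval $\big[(\pi/2-\theta)_+,(\pi-\theta)/2\big]$ from the outset and use the identity $\Lambda_\theta(\eta)=2\sin(\theta+\eta)-\sin\theta\sec\eta$ (which checks out, as does its derivative $2\cos(\theta+\eta)-\sin\theta\sec\eta\tan\eta$), so that nonpositivity of the derivative is immediate from $\theta+\eta\ge\pi/2$ on that interval, with strict negativity in the interior; the case distinction survives only in evaluating the left endpoint ($1$ when $\theta\le\pi/2$, $\sin\theta\ge\Lambda$ when $\theta\ge\pi/2$, the latter being the only place the hypothesis $\theta\le\pi-\arcsin\Lambda$ enters). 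Your route buys a shorter and more uniform monotonicity argument that avoids the auxiliary function $\gamma$ and the global shape analysis; what the paper's longer analysis buys in exchange is the full unimodal profile of $\Lambda_\theta$ on $[0,(\pi-\theta)/2]$ (Fig.~\ref{figLam}), which is convenient background for the subsequent argument in Lemma~\ref{l MIN}, whereas your proof establishes exactly what the present lemma asserts and no more. I found no gaps: the endpoint evaluations, the inclusion $\theta+\eta\in[\pi/2,\pi]$ on the interval, and the passage from strict monotonicity plus the endpoint inequalities to existence and uniqueness (including the boundary cases $\Lambda=1$ and $\theta=\pi-\arcsin\Lambda$, where the solution sits at the left endpoint) are all sound.
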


\begin{proof}
Define the function
$$
\Lam(\eta) = \Lam_\theta(\eta) := \frac{\sin(\theta + 2\eta)}{\cos\eta}
$$
on the interval $\big[0,\, \frac{\pi - \theta}{2} \big]$. Equation \eqref{eqs} is then equivalent to $\Lam(\eta) = \Lam$. The derivative of $\Lam(\eta)$ equals
$$
\Lam'(\eta) = \frac{\gam(\eta)}{\cos^2 \eta},
$$
 where
 $$
 \gam(\eta) = 2\cos(\theta + 2\eta) \cos \eta + \sin(\theta + 2\eta) \sin \eta = \frac{1}{2}\cos(\theta + 3\eta) + \frac{3}{2}\cos(\theta + \eta).
 $$
That is, $\gam(0) = 2\cos\theta$ and $\gam((\pi-\theta)/2) = -2\sin(\theta/2) < 0$, and additionally,
$\gam'(\eta) = -3\sin(\theta + 2\eta) \cos \eta < 0 $ on $\big[0,\, (\pi - \theta)/2\big),$
hence $\gam$ is strictly monotone decreasing.

We have $\Lam(0) = \sin\theta$ and $\Lam\big(\frac{\pi-\theta}{2}\big) = 0$, and additionally,
$$\Lam\Big( \frac{\pi/2-\theta}{3} \Big) = \Lam(\pi/2-\theta) = 1.
$$

Now let $0 < \Lam \le 1$ and consider two cases: $0 < \theta < \pi/2$ and $\pi/2 \le \theta \le \pi - \arcsin\Lam$.
\vspace{2mm}

(i) If $0 < \theta < \pi/2$ then the function $\gam(\eta)$ is first positive and then negative. It follows that the function $\Lam(\eta)$ is first monotone increasing and then monotone decreasing; see Fig.~\ref{figLam}.

Thus, $\Lam(\eta)$ is strictly monotone increasing on $\big[ 0,\, \frac{\pi/2-\theta}{3} \big]$ and strictly monotone decreasing on $\big[ \pi/2-\theta,\, (\pi - \theta)/2 \big]$.
              \begin{figure}[h]
\begin{picture}(0,145)
\scalebox{1}{
\rput(4,0.75){
\psline[linewidth=0.4pt,arrows=->,arrowscale=1.5](0,0)(8,0)
\psline[linewidth=0.4pt,arrows=->,arrowscale=1.5](0,0)(0,4)
\pscurve[linecolor=blue](0,1.8)(1.5,3)(3,3.4)(4.5,3)(7,0)
\psline[linecolor=brown,linewidth=0.4pt,linestyle=dashed](0,3)(4.5,3)(4.5,0)
\psline[linecolor=brown,linewidth=0.4pt,linestyle=dashed](1.5,3)(1.5,0)
\rput(-0.25,3){\scalebox{1}{1}}
\rput(-0.05,-0.25){\scalebox{1}{0}}
\rput(-0.55,1.8){\scalebox{1}{$\sin\theta$}}
\rput(-0.35,3.9){\scalebox{1}{$\Lam$}}
\rput(8.2,0){\scalebox{1}{$\eta$}}
\rput(7,-0.3){\scalebox{1}{$\frac{\pi-\theta}{2}$}}
\rput(4.5,-0.3){\scalebox{1}{$\frac{\pi}{2} - \theta$}}
\rput(1.5,-0.4){\scalebox{1}{$\frac{\frac{\pi}{2} - \theta}{3}$}}
\rput(6.2,2.4){\scalebox{1.2}{$\Lam(\eta)$}}
}}
\end{picture}
\caption{The case $0 < \theta < \pi/2$.}
\label{figLam}
\end{figure}
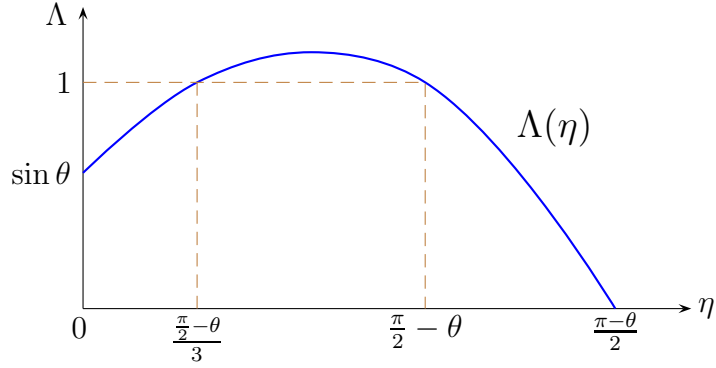 
Hence the equation $\Lam(\eta) = \Lam$ has a unique solution on the interval $\big[ \pi/2 - \theta,\, (\pi - \theta)/2 \big]$; let it be denoted by $\eta = \eta_\Lam(\theta)$. One easily sees that $\eta_1(\theta) = \pi/2 - \theta$ and $\eta_\Lam(\theta) > \pi/2 - \theta$ for $0 < \Lam < 1$.
\vspace{2mm}

(ii) If $\pi/2 \le \theta \le \pi - \arcsin\Lam$ then $\gam(\eta)$ decreases from $\gam(0) = 2\cos\theta < 0$ to $\gam((\pi-\theta)/2) = -2\sin(\theta/2)$, and hence, is negative for $\eta \in [0,\, (\pi-\theta)/2]$.  It follows that the function $\Lam$ is monotone decreasing from $\Lam(0) = \sin\theta$ to $\Lam((\pi-\theta)/2) = 0$.
Since $0 < \Lam \le \sin\theta$, the equation $\Lam(\eta) = \Lam$ has a unique solution on $\big[ 0,\, (\pi - \theta)/2 \big]$, which is also denoted as $\eta_\Lam(\theta)$. In particular, $\eta_\Lam(\pi - \arcsin\Lam) = 0$.
\end{proof}

Now extend the domain of the function $\eta_\Lam$ defined in the proof of this lemma. Namely, let the function $\eta_\Lam : (0,\, \pi) \to \RRR$, $\Lam > 0$ be defined by
\begin{equation}\label{etaLam}
\begin{split}
 \eta_\Lam(\theta) &= \left\{ \begin{array}{ll}  \text{the solution of \eqref{eqs} in $\big[ \big(\frac{\pi}{2}-\theta \big)_+,\ \frac{\pi-\theta}{2} \big]$} ,
 & \text{if}\, \ 0 < \theta < \pi - \arcsin\Lam,\\
  0, & \text{if}\, \ \pi - \arcsin\Lam \le \theta < \pi, \end{array} \right.
\\
& \text{if} \ \ 0 < \Lam \le 1; \\  
\eta_\Lam(\theta) &= 0, \ \ \text{if} \ \ \Lam > 1.
\end{split}
\end{equation}
In particular, $\eta_1(\theta) = (\pi/2 - \theta)_+$.

\begin{lemma}\label{l MIN}
For $\Lam > 0$ and $0 < \theta < \pi$,\, $\eta_\Lam(\theta)$ is a minimizer  in problem \eqref{probl2min}. If $\Lam =1$ and $0 < \theta < \pi/2$, there are two minimizers, $0$ and $\eta_1(\theta) = \pi/2- \theta$. Otherwise, the minimizer $\eta_\Lam(\theta)$ is unique.
\end{lemma}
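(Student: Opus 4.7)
The plan rests on the identity
$$f_{\theta,\Lam}(\eta) - f_{\theta,\Lam}(0) = 2\sin\eta\cdot\big[\Lam - \sin(\theta+\eta)\big],$$
which follows from $\cos(\theta+2\eta)-\cos\theta = -2\sin(\theta+\eta)\sin\eta$. This identity alone disposes of most sub-cases. If $\Lam>1$, then $\Lam>\sin(\theta+\eta)$ everywhere, so $f_{\theta,\Lam}(\eta)>f_{\theta,\Lam}(0)$ for $\eta>0$ and $\eta=0$ is the unique minimiser. If $\Lam=1$, equality holds iff $\sin\eta=0$ or $\sin(\theta+\eta)=1$, giving the candidates $\eta=0$ and $\eta=\pi/2-\theta$; the second is feasible precisely when $\theta\le\pi/2$, which yields the two minimisers for $\theta<\pi/2$ and uniqueness at $\eta=0$ for $\theta\ge\pi/2$. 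If $0<\Lam<1$ and $\theta\ge\pi-\arcsin\Lam$, then $\theta\ge\pi/2$ and $\sin(\theta+\eta)\le\sin\theta=\sin(\pi-\theta)\le\Lam$, so $f_{\theta,\Lam}(\eta)\ge f_{\theta,\Lam}(0)$ with strict inequality for $\eta>0$.

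The substantive case is $0<\Lam<1$ with $0<\theta<\pi-\arcsin\Lam$, where Lemma \ref{l m1} supplies a unique solution $\eta_\Lam(\theta)\in\big[(\pi/2-\theta)_+,(\pi-\theta)/2\big]$ of \eqref{eqs}. I would show it is the global minimiser in three steps. First, a direct computation from the critical-point equation yields $f_{\theta,\Lam}''(\eta_\Lam(\theta))=-2\gam(\eta_\Lam(\theta))/\cos\eta_\Lam(\theta)$; since the candidate lies on the decreasing branch of $\Lam(\cdot)$ where $\gam<0$, this is positive, so $\eta_\Lam(\theta)$ is a local minimum. Second, rewriting the critical-point equation via angle addition gives $\Lam-\sin(\theta+\eta)=\cos(\theta+\eta)\tan\eta$; since $\Lam<1$ forces $\eta_\Lam(\theta)>(\pi/2-\theta)_+$ strictly, one has $\theta+\eta_\Lam(\theta)>\pi/2$ and the right-hand side is negative, so the identity above yields $f_{\theta,\Lam}(\eta_\Lam(\theta))<f_{\theta,\Lam}(0)$. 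Third, beyond $\eta_\Lam(\theta)$ the function $\Lam(\cdot)$ is strictly monotone, so $f_{\theta,\Lam}$ has no further critical points there; combined with $f_{\theta,\Lam}'((\pi-\theta)/2)=2\Lam\cos((\pi-\theta)/2)>0$, this forces $f_{\theta,\Lam}((\pi-\theta)/2)>f_{\theta,\Lam}(\eta_\Lam(\theta))$. Any remaining critical point sits on the increasing branch of $\Lam(\cdot)$ (which arises only when $\sin\theta<\Lam<1$ and $\theta<\pi/2$), where $\gam>0$ makes it a local maximum rather than a minimum.

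The main technical obstacle I anticipate is the bookkeeping: one must simultaneously monitor the sign of $\Lam-\sin\theta$, the position of $\theta$ relative to $\pi/2$ and to $\pi-\arcsin\Lam$, and which branch of $\Lam(\cdot)$ harbours the critical point, all while invoking the qualitative picture from Lemma \ref{l m1}. Uniqueness in each sub-case follows from strict inequalities in the comparisons above; the lone exception is $\Lam=1$ with $\theta<\pi/2$, where both $\eta=0$ and $\eta=\pi/2-\theta$ attain the common minimum value $\cos\theta$.
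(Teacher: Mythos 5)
Your proposal is correct, and it reaches the conclusion by a noticeably different organization than the paper, even though the central computation is the same. The paper assumes a nonzero minimizer exists, derives the system of necessary conditions $f(\eta)\le f(0)$, $\Lam(\eta)=\Lam$, and from it the localization $\theta+\eta\ge\pi/2$ (which both places the candidate in the interval covered by Lemma~\ref{l m1} and shows $\Lam\le 1$), and then verifies optimality of $\eta_\Lam(\theta)$ through exactly the computation you use: your identity $f_{\theta,\Lam}(\eta)-f_{\theta,\Lam}(0)=2\sin\eta\,[\Lam-\sin(\theta+\eta)]$, specialized at a critical point via $\Lam-\sin(\theta+\eta)=\cos(\theta+\eta)\tan\eta$, is precisely the paper's formula \eqref{abc}. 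What your route buys is that the global form of the identity disposes of $\Lam>1$, $\Lam=1$ (including the exact description of the two minimizers) and $\theta\ge\pi-\arcsin\Lam$ in one stroke, without the necessary-conditions argument; in exchange, in the main case $0<\Lam<1$, $0<\theta<\pi-\arcsin\Lam$ you replace the paper's monotonicity bookkeeping by a second-derivative classification, $f''(\eta)=-2\gam(\eta)/\cos\eta$ at critical points, discarding any second critical point as a strict local maximum. Three small points to tighten when writing this up: (i) in the case $\theta\ge\pi-\arcsin\Lam$ the strictness of $f(\eta)>f(0)$ for $\eta>0$ should be justified by noting $\sin(\theta+\eta)<\sin\theta\le\Lam$, since sine is strictly decreasing on $[\pi/2,\pi]$ and $\theta+\eta<\pi$; (ii) for $\pi/2\le\theta<\pi-\arcsin\Lam$ the strict inequality $\eta_\Lam(\theta)>(\pi/2-\theta)_+=0$ follows from $\Lam<\sin\theta=\Lam_\theta(0)$ together with the monotone decrease of $\Lam_\theta$, not from $\Lam<1$ alone; and (iii) to make the "increasing branch implies $\gam>0$" step airtight you should rule out a degenerate critical point where $\gam=0$ (the top of the hump of $\Lam_\theta$): this cannot occur because $\Lam_\theta(\pi/2-\theta)=1$, so $\max\Lam_\theta\ge 1>\Lam$, forcing any second root of $\Lam_\theta(\eta)=\Lam$ to lie strictly below the argmax, where $\gam>0$. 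None of these is a genuine gap; they are the kind of case-tracking you yourself anticipated.
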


\begin{proof}
Recall that problem \eqref{probl2min} amounts to minimization of the function $f(\eta) = \cos(\theta + 2\eta) + 2\Lam \sin\eta$ in $[0,\, (\pi-\theta)/2]$. The derivative of $f$ is
$$
f'(\eta) = -2\sin(\theta + 2\eta) + 2\Lam \cos\eta,
$$
and the equation $f'(\eta) = 0$ is equivalent to $\Lam(\eta) = \Lam$. We have $f'((\pi-\theta)/2) = 2\Lam\sin\frac{\theta}{2} > 0$, hence $\eta = (\pi-\theta)/2$ is not a minimizer in Problem \eqref{probl2min}.

Assume that there is a minimizer $\eta_* \ne 0$; then $\eta_*$ lies in the interior of the interval $[0,\, (\pi-\theta)/2]$, and therefore, $f'(\eta_*) = 0$. This implies that $\Lam(\eta_*) = \Lam$, and additionally, $f(\eta_*) \le f(0)$. Thus, $\eta_*$ satisfies the system of equations
 $$
\left\{
\begin{aligned}
\cos(\theta + 2\eta) + 2\Lam \sin\eta \le \cos\theta;\\
\Lam = \frac{\sin(\theta + 2\eta)}{\cos\eta};\\
0 < \eta < \frac{\pi - \theta}{2}.
\end{aligned}
\right.
$$

Let us study this system. After some algebra one derives from the first equation of the system that $\Lam \le \sin(\theta+\eta)$. Using this inequality, one derives from the second equation that $\cos(\theta+\eta) \le 0$, hence
$$
\theta + \eta \ge \pi/2.
$$
Thus, a solution $\eta_*$ of the system satisfies
\beq\label{unique}
\eta_* \in \big[ (\pi/2-\theta)_+,\, (\pi - \theta)/2 \big].
\eeq

The latter inequality also implies that
$$
\Lam = \frac{\sin(\theta + 2\eta_*)}{\cos\eta_*} \le \frac{\sin(\pi/2 + \eta_*)}{\cos\eta_*} = 1;
$$
in other words, if there is a nonzero minimizer of the system then $\Lam \le 1$. On the contrary, if $\Lam > 1$ then the (unique) minimum in \eqref{probl2min} is attained at $\eta = 0$.

Now let $0 < \Lam \le 1$ and consider two cases: $0 < \theta < \pi/2$ and $\pi/2 \le \theta \le \pi$.
\vspace{2mm}

(i) Let $0 < \theta < \pi/2$. The second and third equations of the system are satisfied for $\eta = \eta_\Lam(\theta)$. It remains to check the first equation.

 Indeed, taking $\eta = \eta_\Lam(\theta)$ and using the second equation of the system and the inequality $\theta + \eta \ge \pi/2$ (which becomes equality iff $\Lam = 1$), one obtains
\begin{equation}\label{abc}
\begin{split}
\cos(\theta + 2\eta) + 2\Lam \sin\eta - \cos\theta  = -2\sin(\theta + \eta) \sin\eta + \frac{2\sin(\theta + 2\eta)}{\cos\eta} \sin\eta & \\
 = 2\tan\eta \big[ -\sin(\theta + \eta) \cos\eta + \sin(\theta + 2\eta) \big] = 2\tan\eta\, \sin\eta \cos(\theta + \eta) \le 0, &
\end{split}
\end{equation}
and the equality here is attained iff $\Lam = 1$.

Taking into account inclusion \eqref{unique} and Lemma \ref{l m1}, one concludes that $\eta = \eta_\Lam(\theta)$ is a unique solution of the system, and therefore, is the unique nonzero minimizer of \eqref{probl2min}. Moreover, if $0 < \Lam < 1$, the minimizer is unique, and if $\Lam = 1$, there are two minimizers, $\eta = 0$ and $\eta = \pi/2 - \theta$.
\vspace{2mm}

(ii) If $\pi/2 \le \theta < \pi - \arcsin\Lam$, the function $f$ decreases in $[0,\, \eta_\Lam(\theta)]$ and increases in $[\eta_\Lam(\theta),\, (\pi-\theta)/2]$, hence the unique minimizer in problem \eqref{probl2min} is $\eta_\Lam(\theta)$.
If $\pi - \arcsin\Lam \le \theta < \pi)$, equation \eqref{eqs} does not have nonzero solutions, hence $f$ is monotone increasing, and the unique minimizer in \eqref{probl2min} is $\eta = 0$.
\end{proof}

According to Lemma \ref{l MIN} and formula \eqref{probl2min},
\beq\label{VkLamTheta}
\varkappa_\Lam(\theta) = \cos(\theta + 2\eta_\Lam(\theta)) + 2\Lam \sin\eta_\Lam(\theta).
\eeq

Let now $\Lam \ge1$ \big(correspondingly, $\lam \ge \hat\lam = \hat\lam_d := \dfrac{d+1}{4}\, \dfrac{b_d}{b_{d-1}}$\big). According to Lemma \ref{l MIN}, $\eta = 0$ is a minimizer to Problem \eqref{probl2min}, therefore
$$
\varkappa_\Lam(\theta) = f_{\theta,\Lam}(0) = \cos\theta,
$$
and thus, 
\beq\label{ineqCHI}
K_{\lam}(\vphi, \psi) = \dfrac{d+1}{4} \big[ 1 + \cos(\vphi+\psi) \big] - \lam.  
\eeq
Using equations \eqref{ineqF}, \eqref{ineqVK}, \eqref{ineqCHI}, and \eqref{md}, one obtains
$$
\inf_{\chi\in\Gam_{\varsigma,\varsigma}}  \int_{[0,\, \pi/2]^2} K_{\lam}(\vphi+\psi)\, d\chi(\vphi,\psi)=
\dfrac{d+1}{4} \inf_{\chi\in\Gam_{\varsigma,\varsigma}} \int_{[0,\, \pi/2]^2} (1 + \cos(\vphi+\psi))\, d\chi(\vphi,\psi) - \lam = m_d - \lam,
$$
and
\beq\label{hatlam}
\nvis(D) \ge \lam \DDD +
\inf_{\chi\in\Gam_{\varsigma,\varsigma}}  \int_{[0,\, \pi/2]^2} K_{\lam}(\vphi+\psi)\, d\chi(\vphi,\psi)
= m_d - \lam (1 - \DDD).
\eeq
Taking $\lam = \hat\lam$, one obtains the statement of Theorem \ref{tt1}.

\section{Notes on optimal mass transfer and proof of Theorem \ref{tt2}}\label{sec:proofThm2}

\subsection{Notes on optimal mass transfer}

The following Propositions 1 and 2 are well known in the theory of optimal mass transfer; see, e.\,g., \cite{Ambrosio}.

Let $X \subset \RRR$ be a compact interval. Let $(X, f_0)$ and $(X, f_1)$ be measure spaces with the probability measures $f_0$ and $f_1$. Denote by $\Gam_{f_0,f_1}$ the set of (probability) measures $\gam$ in $X \times X$ whose marginal measures are $f_0$ and $f_1$, that is, satisfying $\gam(A \times X) = f_0(A)$ and $\gam(X \times B) = f_1(B)$ for any Borel subset $A$ of $(X, f_0)$ and any Borel subset $B$ of $(X, f_1)$. Consider a continuous function $c : X \times X \to [0, +\infty)$.

\begin{utv}\label{utv1}
The problem
$$\inf_{\gam\in\Gam_{f_0,f_1}} \int_{X \times X} c(x,y)\, d\gam(x,y)$$
 has a solution.
 \end{utv}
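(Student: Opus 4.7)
The plan is to apply the direct method in the calculus of variations, exploiting the compactness of $X$ and continuity of $c$.

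First, I would verify that $\Gam_{f_0,f_1}$ is nonempty: the product measure $f_0 \otimes f_1$ has marginals $f_0$ and $f_1$, so it belongs to $\Gam_{f_0,f_1}$. In particular, the infimum $I := \inf_{\gam\in\Gam_{f_0,f_1}} \int c\, d\gam$ is finite (it is bounded above by $\|c\|_\infty$, which is finite because $c$ is continuous on the compact set $X \times X$), so one can pick a minimizing sequence $\gam_n \in \Gam_{f_0,f_1}$ with $\int c\, d\gam_n \to I$.

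Next I would apply Prokhorov's theorem (or Banach--Alaoglu in the dual of $C(X \times X)$) to extract a subsequence, still denoted $\gam_n$, converging weakly-$*$ to some Borel probability measure $\gam^*$ on $X \times X$. Compactness of $X \times X$ makes tightness automatic, so no moment bounds are required. Then I would check that $\gam^* \in \Gam_{f_0,f_1}$: for any $\vphi \in C(X)$, the function $(x,y) \mapsto \vphi(x)$ lies in $C(X \times X)$, hence
\[
\int_X \vphi\, d(\pi_1\# \gam^*) = \int_{X\times X} \vphi(x)\, d\gam^*(x,y) = \lim_n \int_{X\times X} \vphi(x)\, d\gam_n(x,y) = \int_X \vphi\, df_0,
\]
so the first marginal of $\gam^*$ is $f_0$; the argument for the second marginal is identical. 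Thus $\Gam_{f_0,f_1}$ is weak-$*$ closed.

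Finally, since $c$ is continuous and bounded on $X \times X$, the functional $\gam \mapsto \int c\, d\gam$ is continuous under weak-$*$ convergence; in particular it is lower semicontinuous, so
\[
\int c\, d\gam^* \le \liminf_{n\to\infty} \int c\, d\gam_n = I,
\]
which forces equality and shows that $\gam^*$ is a minimizer. There is no real obstacle here: the key simplification is that $X \times X$ is compact and $c$ is continuous, so one avoids the usual technicalities of verifying tightness and handling unbounded costs that arise in optimal transport on non-compact spaces.
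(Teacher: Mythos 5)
Your proof is correct and complete: nonemptiness of $\Gam_{f_0,f_1}$ via the product measure, weak-$*$ compactness of probability measures on the compact set $X\times X$, weak-$*$ closedness of the marginal constraints checked against test functions $\vphi(x)$ and $\vphi(y)$, and continuity of $\gam\mapsto\int c\,d\gam$ for continuous (hence bounded) $c$ --- this is the standard direct-method argument. The paper itself gives no proof of this proposition; it simply records it as well known and cites Ambrosio's lecture notes, where essentially the same argument appears, so there is nothing to compare beyond noting that you have supplied the standard proof the paper leaves to the references.
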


A set $A \subset X \times X$ is said to be $c$-monotone, if for any two points $(x,y)$ and $(x',y')$ in $A$ holds
$$c(x,y) + c(x',y') \le c(x,y') + c(x',y).$$

\begin{utv}\label{utv2}
 If $\gam_*$ minimizes the problem from Proposition 1 then spt$\,\gam_*$ is $c$-monotone.
 \end{utv}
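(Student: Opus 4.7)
The plan is to argue by contradiction using a classical mass-swap. Suppose that $\mathrm{spt}\,\gam_*$ fails to be $c$-monotone; then there exist points $(x_1,y_1),(x_2,y_2)\in\mathrm{spt}\,\gam_*$ (necessarily distinct in $X \times X$) with
$$
\Delta \;:=\; c(x_1,y_1)+c(x_2,y_2) - c(x_1,y_2) - c(x_2,y_1) \;>\; 0.
$$
By uniform continuity of $c$ on the compact set $X \times X$, one can pick open rectangles $U_i \times V_i \ni (x_i, y_i)$, $i=1,2$, with $(U_1 \times V_1) \cap (U_2 \times V_2) = \emptyset$, small enough that
$$
c(x,y) + c(x',y') - c(x,y') - c(x',y) \;\ge\; \Delta/2
$$
for all $(x,y) \in U_1 \times V_1$, $(x',y') \in U_2 \times V_2$. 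Since $(x_i,y_i)\in\mathrm{spt}\,\gam_*$, we have $\gam_*(U_i \times V_i) > 0$ for each $i$.

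Next I would build a competitor $\tilde\gam \in \Gam_{f_0,f_1}$ with strictly smaller cost. Fix $\varepsilon$ with $0 < \varepsilon \le \min_i \gam_*(U_i \times V_i)$ and set
$$
\gam_i \;:=\; \tfrac{\varepsilon}{\gam_*(U_i \times V_i)}\,\gam_*\big|_{U_i \times V_i},
$$
so $\gam_i \le \gam_*$ is a nonnegative measure of total mass $\varepsilon$. Let $\mu_i$ and $\nu_i$ be the $X$- and $Y$-marginals of $\gam_i$ (each of mass $\varepsilon$), and introduce the swapped couplings
$$
\tilde\gam_{12} \;:=\; \tfrac{1}{\varepsilon}\,\mu_1 \otimes \nu_2, \qquad \tilde\gam_{21} \;:=\; \tfrac{1}{\varepsilon}\,\mu_2 \otimes \nu_1,
$$
which are nonnegative, of mass $\varepsilon$, with marginals $(\mu_1, \nu_2)$ and $(\mu_2, \nu_1)$ respectively. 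Define
$$
\tilde\gam \;:=\; \gam_* - \gam_1 - \gam_2 + \tilde\gam_{12} + \tilde\gam_{21}.
$$
Nonnegativity holds because $\gam_1 + \gam_2 \le \gam_*$ (the rectangles being disjoint). A direct bookkeeping check shows $\tilde\gam \in \Gam_{f_0,f_1}$: on the $X$-marginal the contributions $-\mu_1,+\mu_1$ and $-\mu_2,+\mu_2$ cancel, and similarly on the $Y$-marginal.

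It remains to compare the costs. We have
$$
\int c\, d\tilde\gam \;-\; \int c\, d\gam_* \;=\; -\!\int c\, d\gam_1 \;-\; \!\int c\, d\gam_2 \;+\; \!\int c\, d\tilde\gam_{12} \;+\; \!\int c\, d\tilde\gam_{21}.
$$
Since $\gam_i$ is supported in $U_i \times V_i$ and $\tilde\gam_{ij}$ in $U_i \times V_j$, on each piece $c$ differs from its value at the central point $(x_i,y_j)$ by at most some $\om(r)$, where $r$ is the radius of the rectangles and $\om(r)\to 0$ by uniform continuity. Hence the right-hand side is at most $\varepsilon(-\Delta + 4\om(r))$, which is strictly negative once $r$ is small enough, contradicting the optimality of $\gam_*$ asserted by Proposition~\ref{utv1}. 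The construction is conceptually straightforward; the only delicate point is arranging the marginal cancellations, and for that the product-measure choice of $\tilde\gam_{12}, \tilde\gam_{21}$ is exactly what is needed.
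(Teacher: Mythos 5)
Your proof is correct. Note that the paper does not actually prove this proposition: it is stated as a well-known fact of optimal mass transfer with a reference to Ambrosio's lecture notes, so there is no in-paper argument to compare against; your mass-swap construction is precisely the standard textbook proof of (two-point) $c$-monotonicity of optimal plans. All steps check out: a violating pair is necessarily distinct (indeed it must have $x_1\ne x_2$ and $y_1\ne y_2$, since otherwise $\Delta=0$), so disjoint rectangles with positive $\gam_*$-mass exist; disjointness gives $\gam_1+\gam_2\le\gam_*$ and hence nonnegativity of the competitor; the product couplings restore the marginals; and the cost strictly decreases, contradicting the optimality guaranteed by Proposition~\ref{utv1}. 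One small streamlining: having already chosen the rectangles so that the cross-difference is at least $\Delta/2$ for all $(x,y)\in U_1\times V_1$ and $(x',y')\in U_2\times V_2$, the modulus-of-continuity bookkeeping at the end is redundant, since
\[
\int c\,d\gam_1+\int c\,d\gam_2-\int c\,d\tilde\gam_{12}-\int c\,d\tilde\gam_{21}
=\frac{1}{\varepsilon}\iint\bigl[c(x,y)+c(x',y')-c(x,y')-c(x',y)\bigr]\,d\gam_1(x,y)\,d\gam_2(x',y')
\ge\frac{\varepsilon\Delta}{2}>0
\]
gives the strict decrease directly; either version of the estimate completes the contradiction.
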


 Let us prove the following simple statement.

\begin{utv}\label{utv3}
Let $c(x,y) = \kap(x+y)$, with $\kap$ being a strictly concave function, and let the measures $f_0$ and $f_1$ coincide and be supported on $X$. Then the support of the minimizer is the diagonal $x=y$, and
\beq\label{mk}
\inf_{\gam\in\Gam_{f_0,f_0}} \int_{X \times X} \kap(x+y)\, d\gam(x,y) = \int_X \kap(2x)\, df_0(x).
\eeq
 \end{utv}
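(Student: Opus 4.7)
The plan is to combine the existence and $c$-monotonicity supplied by Propositions~\ref{utv1} and~\ref{utv2} with the strict concavity of $\kap$ to force every minimizer onto the diagonal.

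By Proposition~\ref{utv1} a minimizer $\gam_*\in\Gam_{f_0,f_0}$ exists, and by Proposition~\ref{utv2} its support $S:=\mathrm{spt}\,\gam_*$ is $c$-monotone, i.e.\ for all $(x,y),(x',y')\in S$,
$$
\kap(x+y)+\kap(x'+y')\le \kap(x+y')+\kap(x'+y).
$$
Note that the four arguments of $\kap$ appearing here share the common total $x+x'+y+y'$.

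The key step is to show that $S$ is \emph{monotone non-decreasing}, meaning $(x-x')(y-y')\ge 0$ for every pair of points in $S$. Suppose the contrary; without loss of generality $x<x'$ and $y>y'$. Then, among the four sums above, $x+y'$ is the strict minimum and $x'+y$ the strict maximum, while $x+y$ and $x'+y'$ lie strictly in between; since the two pairs share the same total, strict concavity of $\kap$ gives
$$
\kap(x+y)+\kap(x'+y')>\kap(x+y')+\kap(x'+y),
$$
contradicting $c$-monotonicity. Hence $S$ is monotone non-decreasing.

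Finally I would invoke the standard one-dimensional optimal-transport fact (see, e.g., \cite{Ambrosio}) that the only element of $\Gam_{f_0,f_1}$ whose support is monotone non-decreasing is the monotone rearrangement, implemented by push-forward of $f_0$ through $x\mapsto F_1^{-1}(F_0(x))$, where $F_i$ is the CDF of $f_i$. Since here $f_1=f_0$, this map is the identity $f_0$-a.e., and therefore $\gam_*$ is concentrated on the diagonal $\{(x,x):x\in X\}$ with marginal law $f_0$. The cost integral then evaluates to
$$
\int_{X\times X}\kap(x+y)\,d\gam_*(x,y)=\int_X \kap(2x)\,df_0(x),
$$
which is the claim~\eqref{mk}. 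The main obstacle is the use of strict concavity to exclude anti-monotone configurations in the support; the step from monotone non-decreasing support with equal marginals to concentration on the diagonal is a classical 1D optimal-transport fact and does not present any conceptual difficulty.
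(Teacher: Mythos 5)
Your proposal is correct and takes essentially the same approach as the paper: strict concavity combined with the $c$-monotonicity from Propositions~\ref{utv1} and~\ref{utv2} forces $(x-x')(y-y')\ge 0$ on $\mathrm{spt}\,\gam_*$, and the equal marginals then concentrate the plan on the diagonal. The only (minor) difference is in the last step, which the paper proves directly --- for $(x_0,y_0)\in\mathrm{spt}\,\gam_*$ it compares $\gam_*(\{x\le x_0,\ y\le y_0\})$ with both marginals to get $f_0(\{x\le x_0\})=f_0(\{y\le y_0\})$ and hence $x_0=y_0$ using $\mathrm{spt}\,f_0=X$ --- whereas you cite the classical uniqueness of the monotone coupling, which is an acceptable shortcut.
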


\begin{proof}
Let $\gam_*$ be a minimizer; then for any two points $(x,y)$,\, $(x',y') \in \text{spt}\,\gam_*$ holds $(x'-x)(y'-y) \ge 0$. Indeed, otherwise, denoting $\lam = (y-y')/(x'-x+y-y') \in (0,\, 1)$ and using that $\lam(x+y') + (1-\lam)(x'+y) = x'+y'$ and $(1-\lam)(x+y') + \lam(x'+y) = x+y$, one obtains
$$
\lam g(x+y') + (1-\lam) g(x'+y) < g(x'+y') \quad \text{and} \quad (1-\lam) g(x+y') + \lam g(x'+y) < g(x+y),
$$
hence $$g(x+y') + g(x'+y) < g(x'+y') + g(x+y),$$
in contradiction with $c$-monotonicity of spt$\,\gam_*$.
Thus, for any point $(x_0, y_0) \in \text{spt}\,\gam_*$,
$$\gam_*(\{ x \le x_0,\, y \le y_0 \}) = \gam_*(\{ x \le x_0 \}) = f_0(\{ x \le x_0 \}),
 $$
 and similarly,
 $$\gam_*(\{ x \le x_0,\, y \le y_0 \}) = \gam_*(\{ y \le y_0 \}) = f_0(\{ y \le y_0 \}).
  $$
  It follows that $f_0(\{ x \le x_0 \}) = f_0(\{ y \le y_0 \})$, and since spt$\,f_0 = X$, one obtains $x_0 = y_0$. Thus, spt$\,\gam_*$ is contained in the diagonal $x=y$, and
  $$
  \inf_{\gam\in\Gam_{f_0,f_0}} \int_{X \times X} \kap(x+y)\, d\gam(x,y) =  \int_{X \times X} \kap(x+y)\, d\gam_*(x,y) = \int_X \kap(2x)\, df_0(x).
  $$
  \end{proof}

\subsection{Proof of Theorem \ref{tt2}}

Take $0 < \Lam \le 1$. If $0 < \theta < \pi - \arcsin\Lam$ then, by \eqref{etaLam}, $\eta_\Lam(\theta)$ is the (unique) solution of equation \eqref{eqs} on the interval $\big[ (\pi/2-\theta)_+,\ (\pi-\theta)/2 \big]$. Hence we have $\pi/2 \le \theta + 2\eta_\Lam(\theta) \le \pi$ and $\sin(\theta + 2\eta_\Lam(\theta)) = \Lam \cos\eta_\Lam(\theta) \le \Lam$, and therefore, $\theta + 2\eta_\Lam(\theta) \in [\pi - \arcsin\Lam,\, \pi]$. Thus,
$$
\frac{\pi - \theta - \arcsin\Lam}{2} \le \eta_\Lam(\theta) \le \frac{\pi - \theta}{2}.
$$
Using these inequalities and equation \eqref{VkLamTheta}, one obtains
\beq\begin{split}\label{ineq4}
1 + \varkappa_\Lam(\theta) = 1 + \cos(\theta + 2\eta_\Lam(\theta))  + 2\Lam \sin\eta_\Lam(\theta)\\
\ge 2\Lam \sin\Big( \frac{\pi - \theta - \arcsin\Lam}{2} \Big) = 2\Lam \cos\Big( \frac{\theta + \arcsin\Lam}{2} \Big).
\end{split}\eeq

Define the strictly concave function
$$
\kap(\theta) = \kap_{\Lam,\ve}(\theta) = \left\{ \begin{array}{ll} \cos\Big( \dfrac{\theta + \arcsin\Lam}{2} \Big), & \text{if } \ 0 < \theta < \pi - \arcsin\Lam\\
\dfrac{\pi - \arcsin\Lam - \theta}{2} - \ve(\theta-\pi+\arcsin\Lam)^2, & \text{if } \ \pi - \arcsin\Lam \le \theta <\pi \end{array} \right.;
$$
it is assumed that $\ve > 0$ is small.

Using the relations \eqref{Klamb} and \eqref{ineq4} and the inequality $\kap(\theta) \le \cos\big( \frac{\theta + \arcsin\Lam}{2} \big)$, one obtains
$$
K_\lam(\vphi, \psi) = \dfrac{d+1}{4}\, \big[ 1 + \varkappa_\Lam(\vphi+\psi)  - \dfrac{b_d}{b_{d-1}} \Lam \big]
\ge \dfrac{d+1}{4}\, \Big[ 2\Lam \cos\Big( \frac{\vphi+\psi + \arcsin\Lam}{2} \Big) - \dfrac{b_d}{b_{d-1}} \Lam \Big]
$$ $$
\ge \dfrac{d+1}{4}\, \big[ 2\Lam\, \kap(\vphi+\psi)  - \dfrac{b_d}{b_{d-1}} \Lam \big],
$$
and using this inequality and equations \eqref{ineqF} and \eqref{ineqVK}, one gets
$$
\inf_{D \subset S^{d-1}} \big( \nvis(D) - \lam\DDD \big) \ge 2\Lam\, \dfrac{d+1}{4}
\inf_{\chi\in\Gam_{\varsigma,\varsigma}}  \int_{[0,\, \pi/2]^2} \kap(\vphi+\psi)\, d\chi(\vphi,\psi) - \dfrac{d+1}{4}\, \dfrac{b_d}{b_{d-1}} \Lam.
$$
Denote $a := \arcsin\Lam$. By formula \eqref{mk} in Proposition \ref{utv3},
$$
\inf_{\chi\in\Gam_{\varsigma,\varsigma}}  \int_{[0,\, \pi/2]^2} \kap(\vphi+\psi)\, d\chi(\vphi,\psi) = \int_0^{\pi/2} \kap(2\vphi)\, d\varsigma(\vphi)
$$ $$
=  \int_0^{\frac{\pi}{2}-\frac{{a}}{2}}\! \cos\Big( \vphi + \frac{{a}}{2} \Big) d(\sin^{d-1}\vphi)
 + \int_{\frac{\pi}{2}-\frac{{a}}{2}}^{\pi/2} \! \Big[ \Big( \frac{\pi - {a}}{2} - \vphi \Big) - \ve(2\vphi-\pi+{a})^2 \Big] d(\sin^{d-1}\vphi)
$$
\beq\label{lastint} \begin{split}
&=  \int_0^{\frac{\pi}{2}}\! \cos\Big( \vphi + \frac{{a}}{2} \Big) d(\sin^{d-1}\vphi)\\
 - & \int_{\frac{\pi}{2}-\frac{{a}}{2}}^{\pi/2} \! \Big[ \cos\Big( \vphi + \frac{{a}}{2} \Big) - \Big( \frac{\pi - {a}}{2} - \vphi \Big)  + \ve(2\vphi-\pi+{a})^2 \Big] d(\sin^{d-1}\vphi).
\end{split}\eeq
The former integral in \eqref{lastint} equals
$$\cos({a}/2) \int_0^{\frac{\pi}{2}}\! \cos\vphi\, d(\sin^{d-1}\vphi) - \sin({a}/2) \int_0^{\frac{\pi}{2}}\! \sin\vphi\, d(\sin^{d-1}\vphi)
$$ $$
 = \cos({a}/2)\, \frac{b_d}{2b_{d-1}} - \frac{d-1}{d} \sin({a}/2).
 $$
The integrant in the latter integral in \eqref{lastint} is monotone increasing, and therefore, does not exceed its value at $\pi/2$, which is equal to ${a}/2- \sin({a}/2) + \ve{a}^2$, and therefore, the latter integral does not exceed $({a}/2)({a}/2- \sin({a}/2) + \ve{a}^2)$. Taking into account that $\ve > 0$ is arbitrarily small, one obtains
 $$
 \inf_{\chi\in\Gam_{\varsigma,\varsigma}}  \int_{[0,\, \pi/2]^2} \kap(\vphi+\psi)\, d\chi(\vphi,\psi) \ge
  \cos({a}/2)\, \frac{b_d}{2b_{d-1}} - \frac{d-1}{d} \sin({a}/2) - \frac{{a}}{2} \Big( \frac{{a}}{2}- \sin({a}/2) \Big).
$$
It follows that
\beq\label{in1}
\nvis(D) - \lam\DDD \ge - Q\Big( \dfrac{4}{d+1}\, \dfrac{b_{d-1}}{b_d}\, \lam \Big),
\eeq
where
$$
Q(\Lam) =  \dfrac{d+1}{4} \Big( \frac{b_d}{b_{d-1}} (1 - \cos({a}/2)) + 2\,\frac{d-1}{d} \sin({a}/2) + {a} \big( {a}/2- \sin({a}/2) \big) \Big)  \Lam.
$$
The function $\al \mapsto 1 - \cos(\al/2) - \big( 1 - \frac{1}{\sqrt{2}} \big) \sin\al$,\, $0 \le \al \le \pi/2$ is strictly convex and is equal to zero at $\al=0$ and $\al=\pi/2$, hence it is negative for $0 < \al </2$. Substituting $\al = a$ and using that $\sin{a} = \Lam$, one obtains
\beq\label{ineq1}
 1 - \cos({a}/2) \le \Big( 1 - \frac{1}{\sqrt{2}} \Big) \Lam.
 \eeq
Since $\cos({a}/2) \ge 1/\sqrt 2$, one has $\sin({a}/2) \le \sqrt 2 \sin({a}/2)\cos({a}/2) = (1/\sqrt{2}) \sin{a}$, that is,
\beq\label{ineq2}
\sin({a}/2) \le \frac{1}{\sqrt{2}}\, \Lam.
\eeq
Further, the function $\al \mapsto \al \big( \al/2- \sin(\al/2) \big) - \frac{\pi}{2} \Big( \frac{\pi}{4} - \frac{1}{\sqrt{2}} \Big) \sin\al$,\, $0 \le \al \le \pi/2$ is strictly convex and is equal to zero at $\al=0$ and $\al=\pi/2$, hence it is negative for $0 < \al </2$. Substituting $\al = a$ and using that $\sin{a} = \Lam$, one gets
\beq\label{ineq3}
{a} \big( {a}/2- \sin({a}/2) \big) \le \frac{\pi}{2} \Big( \frac{\pi}{4} - \frac{1}{\sqrt{2}} \Big) \Lam.
\eeq
Using inequalities \eqref{ineq1}, \eqref{ineq2}, and \eqref{ineq3}, one obtains that $Q(\Lam) \le \dfrac{c}{2} \lam^2,$ where $\Lam = \dfrac{4}{d+1}\, \dfrac{b_{d-1}}{b_d}\, \lam$ and
$$
c = \dfrac{8}{d+1}  \Big( \frac{b_{d-1}}{b_{d}} \Big)^2\Big[ \frac{b_d}{b_{d-1}} \Big( 1 - \frac{1}{\sqrt{2}} \Big) + \sqrt{2}\,\frac{d-1}{d} + \frac{\pi}{2} \Big( \frac{\pi}{4} - \frac{1}{\sqrt{2}} \Big) \Big],$$
hence
$$
\nvis(D) - \lam\DDD \ge - \dfrac{c}{2} \lam^2.
$$

Taking $\lam = \DDD/c$, one comes to the inequality
$$
\nvis(D) \ge \frac{1}{2c}\, \DDD^2.
$$
Claim (a) of Theorem \ref{tt2} is proved.

Further, we have
$$
Q\Big( \dfrac{4}{d+1}\, \dfrac{b_{d-1}}{b_d}\, \lam \Big) = \dfrac{4(d - 1)}{d(d+1)}\, \Big( \dfrac{b_{d-1}}{b_d} \Big)^2\, \lam^2\, (1 + o(1))
\quad \text{as} \ \ \lam \to 0.
$$
Substituting $\lam = \dfrac{d(d+1)}{8(d - 1)}\, \Big( \dfrac{b_{d}}{b_{d-1}} \Big)^2 \DDD$ in \eqref{in1} one obtains
$$
\nvis(D) \ge \dfrac{d(d+1)}{16(d - 1)}\, \Big( \dfrac{b_{d}}{b_{d-1}} \Big)^2 \DDD^2 (1 + o(1)) \quad \text{as} \ \ \DDD \to 0.
$$
Claim (b) of Theorem \ref{tt2} is also proved.

\section{Numerical simulation of the problem}\label{sec:numeric}

By \eqref{ineqF} and \eqref{ineqVK} we have
$$
\nvis(D) \ge \lam\DDD + \III(\lam),
$$
where
$$
\III(\lam) = \III_d(\lam) :=\inf_{\chi\in\Gam_{\varsigma,\varsigma}}  \int_{[0,\, \pi/2]^2} K_{\lam}(\vphi,\psi)\, d\chi(\vphi,\psi)
$$
(recall that $K_\lam = K_\lam^d$ and $\varsigma = \varsigma_d$ depend on $d$).
According to \eqref{Klamb},
\begin{equation}\label{infChi}
\III(\lam) = \frac{d+1}{4} \inf_{\chi\in\Gam_{\varsigma,\varsigma}}  \int_{[0,\, \pi/2]^2} [1 + \varkappa_\Lam(\vphi+\psi)]\, d\chi(\vphi,\psi) - \lam;
\end{equation}
recall that $\Lam = \dfrac{4}{d+1}\, \dfrac{b_{d-1}}{b_d}\, \lam$ and the function $\varkappa_\Lam$ is given by \eqref{VkLamTheta}.

Denote by $\III^*$ the Legendre transform of $\III$,\, $\III^*(x) = \sup_{\lam>0} \big( \lam x + \III(\lam) \big)$. For $\lam \ge \hat\lam = \dfrac{d+1}{4}\, \dfrac{b_d}{b_{d-1}}$ and $0 \le x \le 1$ by \eqref{hatlam} one obtains
$$
\lam x + \III(\lam) = m_d - \lam(1 - x) \le  m_d - \hat\lam(1 - x) = \hat\lam x + \III(\hat\lam),
$$
hence
\beq\label{istar}
\III^*\big(x\big) = \sup_{0<\lam\le\hat\lam} \big( \lam x + \III(\lam) \big).
\eeq

So, one comes to the following theorem.

\begin{theorem}\label{t5}
We have
\begin{equation}\label{ineq5}
\nvis(D) \ge \III^*(\DDD).
\end{equation}
\end{theorem}

Note in passing that for $0 < \Lam < 1$ the function $\varkappa_\Lam(\theta)$ is strictly concave for $\theta \in [0,\, \pi - \arcsin\Lam]$ and strictly convex for $\theta \in [\pi - \arcsin\Lam,\, \pi]$; for the proof see the Appendix. Numerical plots of $1+\kappa_{\Lambda}$ are shown in Fig.~\ref{fig:kappamanylambda}.  It follows that the measure $\chi_*$ minimizing the integral in \eqref{infChi} satisfies the following conditions: the sets spt$\,\chi_* \cap \{ \vphi+\psi \le \pi - \arcsin\Lam \}$ and spt$\,\chi_* \cap \{ \vphi+\psi \ge \pi - \arcsin\Lam \}$ are the graphs of (generally multivalued) monotone increasing and monotone decreasing mappings, respectively, that is, any two points $(\vphi_1,\psi_1)$ and $(\vphi_2,\psi_2)$ from the former set satisfy the inequality $(\vphi_1 - \vphi_2)(\psi_1 - \psi_2) \ge 0$, and any two points from the latter one satisfy the inverse inequality.

\begin{figure}[ht]
\centering
\begin{overpic}[
width=0.6\textwidth]{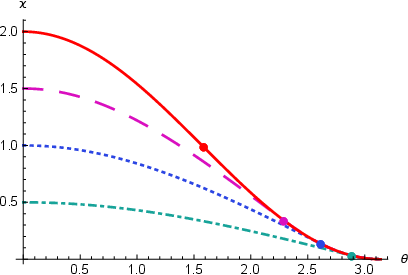}
\put(9,12){$\Lambda \approx 0.25 $}
\put(9,25){$\Lambda \approx 0.5 $}
\put(9,38){$\Lambda \approx 0.75 $}
\put(36,49){$\Lambda \approx 1 $}
\end{overpic}
\caption{Numerical plots of the function $1+\varkappa_\Lam(\theta)$ for fixed values of $\Lambda$. The dots show the inflection points on the plot for $\theta = \pi - \arcsin \Lambda$: lower values of $\Lambda$ correspond to the inflection points with higher $\theta$-coordinates.}
\label{fig:kappamanylambda}
\end{figure}

To evaluate the value
\begin{equation}\label{eq:valest}
\inf_{\chi\in\Gam_{\varsigma,\varsigma}}  \int_{[0,\, \pi/2]^2} [1 + \varkappa_\Lam(\vphi+\psi)]\, d\chi(\vphi,\psi)    
\end{equation}
approximately, we discretize the problem using linear programming. We have
\begin{align}\label{eq:LP}
    \min  & \sum_{i=1}^n \sum_{j=1}^n c_{ij} x_{ij}\notag\\
    \text{s.t.} & \sum_{i=1}^n x_{ik} =\sum_{j=1}^n x_{kj} = b_k \quad \forall k \in \{1,\dots, n\},\\
    & x_{ij}\geq 0\; \, \forall\, i, j\in \{1,\dots, n\}.\notag
\end{align}

Here
\[
b_k = \sin^{d-1} \vphi_k - \sin^{d-1}\vphi_{k-1} \quad \forall k \in \{1,\dots, n\}, \quad \text{with} \ \ \vphi_k=\frac{k}{n}\cdot \frac{\pi}{2},
\] and for $c_{ij}$'s we consider two choices. In the first instance we consider 
\[
c_{ij} = \inf \{ 1 + \varkappa_\Lam\left(\vphi+\psi \right) : (\vphi, \psi)  \in [\vphi_{i-1},\, \vphi_i] \times [\vphi_{j-1},\, \vphi_j] \}\quad \forall i, j\in \{1,\dots, n\}.
\]
Since by \eqref{derivative}, the function $\varkappa_\Lam$ is monotone decreasing, in this case one has
$$
c_{ij} = 1 + \varkappa_\Lam\left(\frac{i+j}{n}\cdot \frac{\pi}{2} \right).
$$
An exact solution to the linear program \eqref{eq:LP} with this choice of objective function coefficients gives a lower approximation to the value of the integral \eqref{eq:valest}. 

Another choice is to consider the values of $c_{ij}$ that correspond to the values of $1+ \varkappa_\Lam $ in the middle of each cell, that is,
$$
c_{ij} = 1 + \varkappa_\Lam\left(\frac{i+j-1}{n}\cdot \frac{\pi}{2} \right).
$$

We ran our numerical experiments in python and used Gurobi package to solve the Linear Programming problem. The numerical plots shown in Fig.~\ref{figGraphs} are obtained from solving the linear programs with $1000\times 1000$ variables for 1000 values of $\lambda$, for $d=2$ and $d=3$ (in this case we used the middle-of-cell version). 

The solutions of our linear programs (that is, the nonzero values of $x_{ij}$ that correspond to the optimal solutions) are plotted in Figures~\ref{fig:measures2d}--\ref{fig:measures3dproj} for selected values of $\lambda$ (that are close to the values $1/3$, $2/3$ and $1$ for $\Lambda$). These plots were obtained by solving the linear programming problems on a $4000\times 4000$ grid (again we used the midpoint values). It took around 16 minutes on average to solve each one of the $4000\times 4000$ linear programming problems on a desktop computer (Intel Core i9-10900 CPU  2.80GHz 64 bit Windows PC, with Gurobi optimizer version 10.0.0), in case some readers find this information relevant.
\begin{figure}[ht]
\centering
\begin{overpic}[
width=0.3\textwidth]{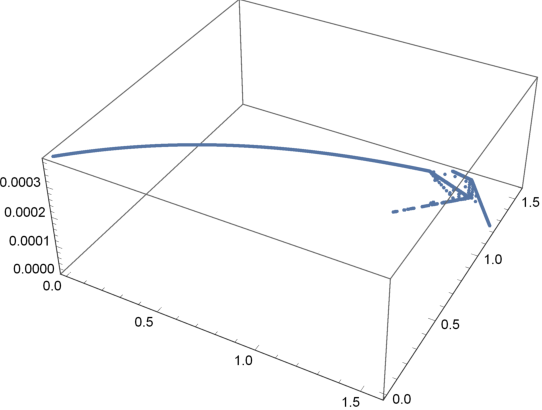}
\put(0,65){$d=2$, $\lambda\approx 0.3888 $}
\end{overpic}
\;
\begin{overpic}[
width=0.3\textwidth]{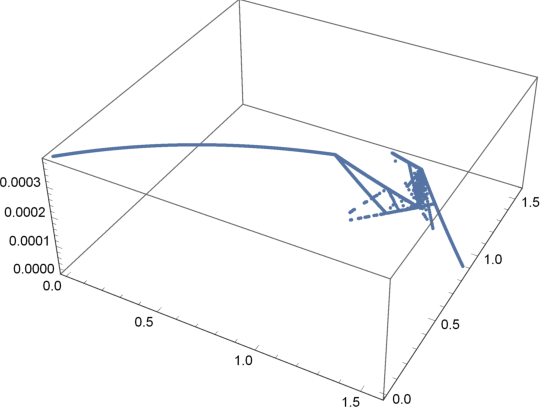}
\put(0,65){$d=2$, $\lambda\approx 0.7775 $}
\end{overpic}
\;
\begin{overpic}[
width=0.3\textwidth]{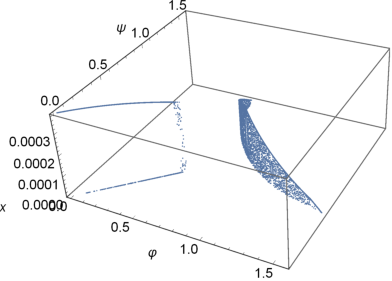}
\put(0,65){$d=2$, $\lambda\approx 1.1781 $}
\end{overpic}
\caption{Numerical solutions showing the approximation of optimal measures $\chi (\varphi,\psi)$ in the two-dimensional case: the horizontal plane corresponds to the (symmetric) parameters $\varphi$ and $\psi$, and the vertical axis shows nonzero values of approximating variables $x_{ij}$.}
\label{fig:measures2d}
\end{figure}

\begin{figure}[ht]
\centering
\begin{overpic}[
width=0.3\textwidth]{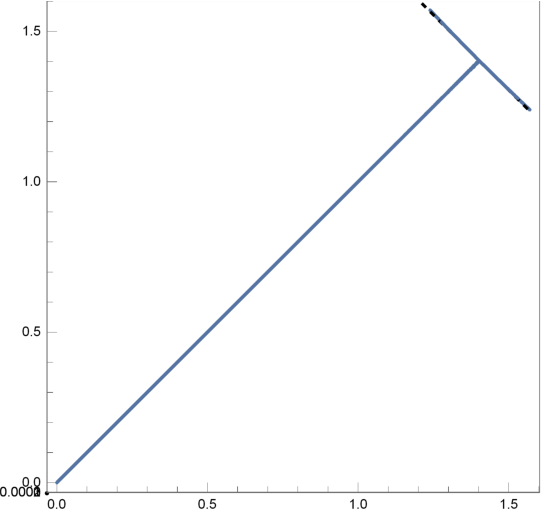}
\put(30,10){$d=2$, $\lambda\approx 0.3888 $}
\end{overpic}
\;
\begin{overpic}[
width=0.3\textwidth]{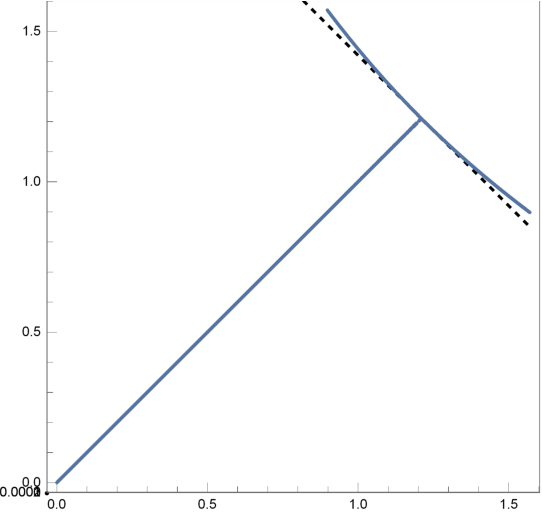}
\put(30,10){$d=2$, $\lambda\approx 0.7775 $}
\end{overpic}
\;
\begin{overpic}[
width=0.3\textwidth]{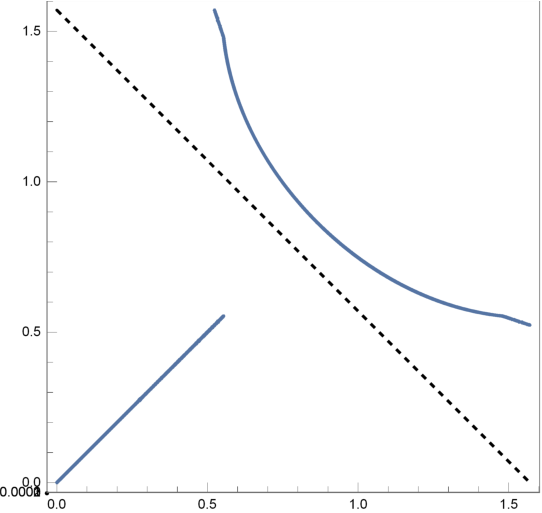}
\put(30,10){$d=2$, $\lambda\approx 1.1781 $}
\end{overpic}
\caption{These images are projections of the graphs shown in Fig.~\ref{fig:measures2d} onto the $(\varphi,\psi)$ plane. The dashed lines are the lines $\varphi + \psi = \pi - \arcsin\Lambda$.
}
\label{fig:measures2dproj}
\end{figure}

\begin{figure}[ht]
\centering
\begin{overpic}[
width=0.3\textwidth]{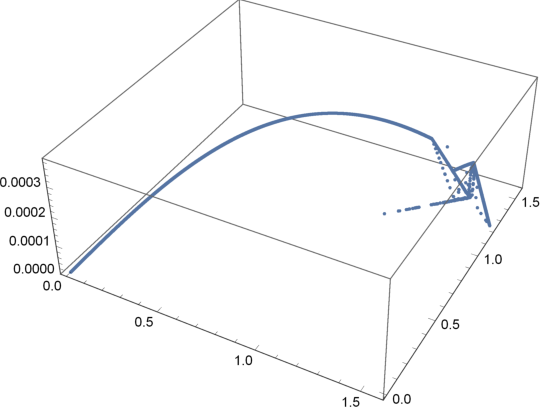}
\put(0,65){$d=3$, $\lambda\approx 0.4400 $}
\end{overpic}
\;
\begin{overpic}[
width=0.3\textwidth]{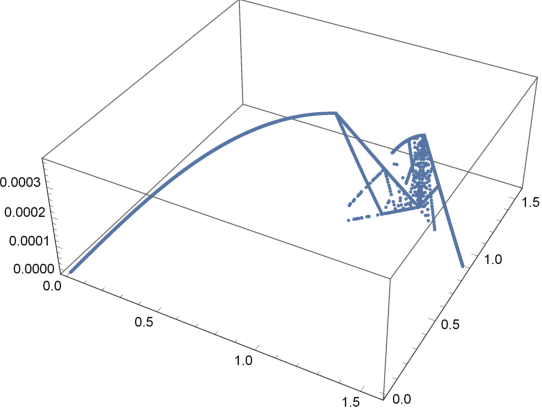}
\put(0,65){$d=3$, $\lambda\approx 0.8800 $}
\end{overpic}
\;
\begin{overpic}[
width=0.3\textwidth]{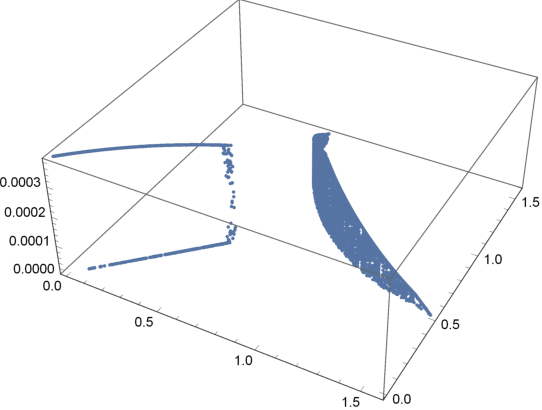}
\put(0,65){$d=3$, $\lambda\approx 1.1333 $}
\end{overpic}
\caption{Numerical solutions showing the approximation of optimal measures $\chi (\varphi,\psi)$ in the three-dimensional case, similarly to the two-dimensional case: the horizontal plane corresponds to the (symmetric) parameters $\varphi$ and $\psi$, and the vertical axis shows nonzero values of approximating variables $x_{ij}$.
}
\label{fig:measures3d}
\end{figure}

\begin{figure}[ht]
\centering
\begin{overpic}[
width=0.3\textwidth]{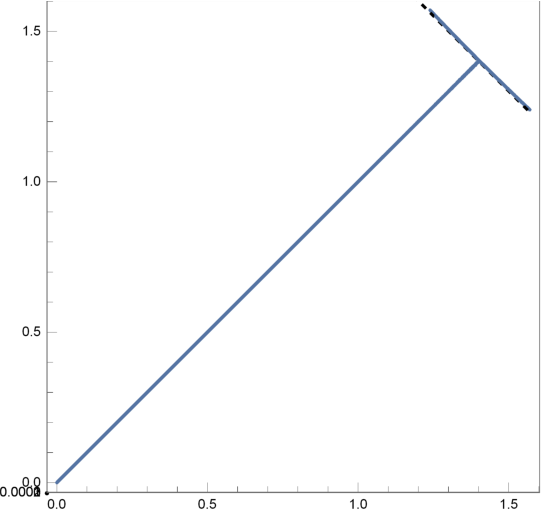}
\put(30,10){$d=3$, $\lambda\approx 0.4440 $}
\end{overpic}
\;
\begin{overpic}[
width=0.3\textwidth]{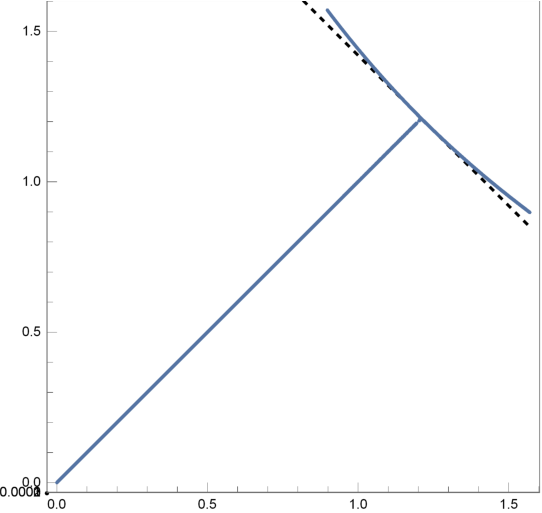}
\put(30,10){$d=3$, $\lambda\approx 0.8880 $}
\end{overpic}
\;
\begin{overpic}[
width=0.3\textwidth]{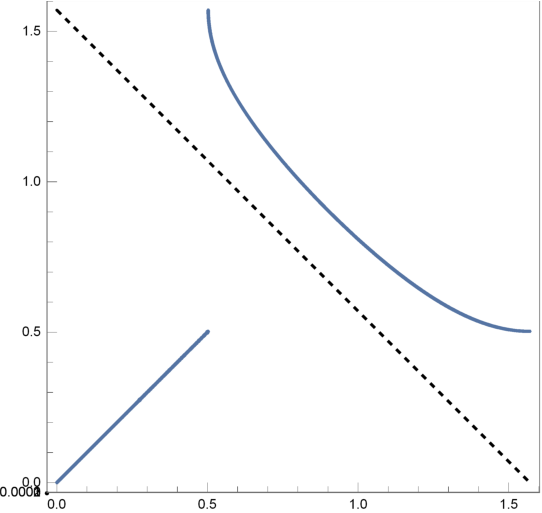}
\put(30,10){$d=3$, $\lambda\approx 1.1333 $}
\end{overpic}
\caption{These images are projections of the graphs shown in Fig.~\ref{fig:measures3d} onto the $(\varphi,\psi)$ plane. The dashed lines are the lines $\varphi + \psi = \pi - \arcsin\Lambda$
}
\label{fig:measures3dproj}
\end{figure}
These values may provide useful intuition about the optimal measure. It appears that the measure concentrates on a low-dimensional subset, and the peculiar patterns seen on the plots are likely artifacts of the numerical model that we use. It is a tempting direction for future research and numerical experiments to obtain a more consistent and accurate approximation to the measure itself and generally sharpen our computational results. Our code is posted on GitHub, \cite{code}.

The numerically found value $\III^*(1)$ equals $0.9878238$ for $d=2$ and $0.9694462$ for $d=3$ (calculated using the LP approximation on $4000\times 4000$ grid and with midcell value choice for $c_{ij}$), which is a good approximation of $m_2$ and $m_3$, respectively. The difference between the numerically obtained approximation and earlier theoretical estimate shows in the seventh decimal place, and is therefore smaller than $10^{-6}$. The lower approximation (that is, the same calculation on $4000\times 4000$ grid, but with the minimal values assigned to $c_{ij}$) resulted in $0.9876127$ and $0.9691021$ for $m_2$ and $m_3$ respectively. 


\section*{Appendix}

Here we prove that for $0 < \Lam < 1$ the function $\varkappa_\Lam(\theta)$ is (a) strictly concave for $\theta \in [0,\, \pi - \arcsin\Lam]$ and (b) strictly convex for $\theta \in [\pi - \arcsin\Lam,\, \pi]$.

(a) It suffices to show that $\frac{d^2}{d^2 \theta}\varkappa_\Lam(\theta) < 0$ for $0 < \theta < \pi - \arcsin\Lam$. One easily derives from \eqref{eqs} the following relations for the function $\eta = \eta_\Lam(\theta)$:
$$
\eta' = -\frac{\cos(\theta + 2\eta)}{\Lam\sin\eta + 2\cos(\theta + 2\eta)}
\quad \text{and} \quad 1 + 2\eta' = \frac{\Lam\sin\eta}{\Lam\sin\eta + 2\cos(\theta + 2\eta)},
$$
whence
\begin{equation}\label{derivative}\begin{split}
\frac{d}{d\theta}\varkappa_\Lam(\theta) = -\sin(\theta + 2\eta) (1+2\eta') +2\Lam \eta' \cos\eta
\\
= - \frac{\Lam\sin\eta \sin(\theta + 2\eta) + 2\Lam\cos\eta \cos(\theta + 2\eta)}{\Lam\sin\eta + 2\cos(\theta + 2\eta)}
= -\Lam\cos\eta,
\end{split}\end{equation}
and
$$
\frac{d^2}{d^2 \theta}\varkappa_\Lam(\theta) = \Lam \eta' \sin\eta = -\Lam \frac{\sin\eta\cos(\theta + 2\eta)}{\Lam\sin\eta + 2\cos(\theta + 2\eta)}.
$$
Let us show that both numerator and denominator in the right hand side of this equation are negative; it will follow that $\frac{d^2}{d^2 \theta}\varkappa_\Lam(\theta) < 0$.

One has $0 \le \eta = \eta_\Lam(\theta) < \frac{\pi-\theta}{2}$. Taking account of \eqref{eqs}, the equality $\eta = 0$ implies that $\Lam = \sin\theta$ and $\theta \ge \pi/2$, hence $\theta = \pi - \arcsin\Lam$ which is impossible. It follows that $\sin\eta > 0$.

Further, one has $\theta + 2\eta \le \pi$ and $\theta + 2\eta \ge \theta + 2\big(\frac{\pi}{2}-\theta \big)_+$. In particular, if $\theta < \pi/2$ then $\theta + 2\eta \ge \pi - \theta > \pi/2$, and if $\theta \ge \pi/2$ then $\theta + 2\eta \ge \theta$. Thus, $\pi/2 \le \theta + 2\eta \le \pi$, and the equality $\theta + 2\eta = \pi/2$ implies that $\theta = \pi/2$ and $\eta = 0$, hence by \eqref{eqs}, $\Lam = 1$, which is impossible. It follows that $\cos(\theta + 2\eta) < 0$.

Finally, since $\pi/2 \le \theta + \big(\frac{\pi}{2}-\theta \big)_+ \le \theta + \eta \le \pi$, by \eqref{eqs} one has
$$
\Lam\sin\eta + 2\cos(\theta + 2\eta) = \frac{\sin\eta \sin(\theta + 2\eta) + 2\cos\eta \cos(\theta + 2\eta)}{\cos\eta} < \frac{\cos(\theta + \eta)}{\cos\eta} \le 0.
$$

(b) For $\theta \in [\pi - \arcsin\Lam,\, \pi]$, by \eqref{etaLam} we have $\eta_\Lam(\theta) = 0$, and so, by \eqref{VkLamTheta}, $\varkappa_\Lam(\theta) = \cos\theta$; therefore the function $\varkappa_\Lam$ is strictly convex on this interval.

Let us note in passing that according to \eqref{etaLam}, for $\theta \in (0,\, \pi)$ we have $\eta = \eta_\Lam(\theta) \in [0,\, \pi/2)$, and therefore by formula \eqref{derivative}, 
\begin{equation}\label{neg}
    \frac{d}{d\theta}\varkappa_\Lam(\theta) < 0.
\end{equation}

\section*{Acknowledgements}

The work of AP was supported by the projects UIDB/04106/2020 and UIDP/04106/2020, https://doi.org/10.54499/UIDB/04106/2020 and https://doi.org/10.54499/UIDP/04106/2020, and by the project CoSysM3, ref. 2022.03091.PTDC through FCT.


\begin{thebibliography}{99}

\bibitem{Ambrosio}
L. Ambrosio. {\it Lecture notes on optimal transport problems}. Lectures given in Madeira (PT), Euro Summer School "Mathematical aspects of evolving interfaces", 2-9 July 2000.

\bibitem{BrFK}
F. Brock, V. Ferone and B. Kawohl.\, \textit{A symmetry problem in the calculus of variations}.\, Calc. Var. {\bf 4}, 593-599 (1996).

\bibitem{BK}
G. Buttazzo, B. Kawohl.\, \textit{On Newton's problem of minimal resistance}.\, Math. Intell. {\bf 15}, 7--12 (1993).

\bibitem{BFK}
G. Buttazzo, V. Ferone, B. Kawohl.\,
\textit{Minimum problems over sets of concave functions and related questions}.\, Math. Nachr. {\bf 173}, 71--89 (1995).

\bibitem{Chernov}
N. Chernov. {\it Entropy, Lyapunov exponents, and mean free path for billiards.} J. Stat. Phys {\bf 88}, 1-29 (1997).

\bibitem{CL1}
M. Comte, T. Lachand-Robert.\, \textit{Newton's problem of the body of minimal resistance under a single-impact assumption}.\,
Calc. Var. Partial Differ. Equ. {\bf 12}, 173-211 (2001).

\bibitem{CL2}
M. Comte, T. Lachand-Robert.\, \textit{Existence of minimizers for Newton's problem of the body of minimal resistance under a single-impact assumption}.\, J. Anal. Math. {\bf 83}, 313-335 (2001).

\bibitem{LO}
T. Lachand-Robert and E. Oudet.\, \textit{Minimizing within convex bodies using a convex hull method}.\, SIAM J. Optim. {\bf 16}, 368-379 (2006).

\bibitem{LP1}
T. Lachand-Robert and M.\,A. Peletier.\,
\textit{Newton's problem of the body of minimal resistance in the class of convex developable functions}.\, Math. Nachr. {\bf 226}, 153-176 (2001).

\bibitem{UMN09}
A Plakhov. {\it Scattering in billiards and problems of Newtonian aerodynamics}. Russ. Math. Surv. {\bf 64}, 873-938 (2009).

\bibitem{ebook}
A. Plakhov. {\it Exterior billiards. Systems with impacts outside bounded domains}. Springer, New York, 2012.

\bibitem{canadian}
A. Plakhov. {\it Optimal roughening of convex bodies}. Canad. J. Math. {\bf 64}, 1058-1074 (2012).

\bibitem{kakeya}
A Plakhov. {\it The problem of minimal resistance for functions and domains.} SIAM J. Math. Anal. {\bf 46}, 2730-2742 (2014).

\bibitem{SIREV}
A. Plakhov. {\it Problems of minimal resistance and the Kakeya problem.} SIAM Review {\bf 57}, 421-434 (2015).

\bibitem{Nonl2016}
A. Plakhov. {\it Newton’s problem of minimal resistance under the single-impact assumption}. Nonlinearity {\bf 29}, 465-488 (2016).

\bibitem{camouflage}
A. Plakhov. {\it The problem of camouflaging via mirror reflections}. Proc. R. Soc. A {\bf 473}: 20170147. Published: 27 Sept. 2017.

\bibitem{invisibility}
A Plakhov and V Roshchina. {\it Invisibility in billiards}. Nonlinearity {\bf 24}, 847-854 (2011).

\bibitem{code} V. Roshchina (2024). index-of-visibility: Numerical experiments approximating the index of visibility. GitHub. \url{https://github.com/spectrahedron/index-of-visibility}

\bibitem{stoyanov}
L. Stoyanov. 
{\it Santalo's formula and stability of trapping sets of positive measure}. J. Diff. Eqs {\bf 263}, 2991-3008 (2017).

\bibitem{T}
S. Tabachnikov.\, {\it Billiards}. Paris: Soci\'{e}t\'{e} Math\'{e}matique, 1995.

\bibitem{W}
G. Wachsmuth.\, {\it The numerical solution of Newton’s problem of least resistance}.\, Math. Program. A {\bf 147}, 331-350 (2014).

\end{thebibliography}
\end{document}